\newtheorem{thm}{Theorem}[section]
\newtheorem{lem}[thm]{Lemma}
\newtheorem{dfn}[thm]{Definition}
\crefname{thm}{Theorem}{Theorems}
\crefname{lem}{Lemma}{Lemmas}
\crefname{pro}{Proposition}{Propositions}
\crefname{dfn}{Definition}{Definitions}
\crefname{exm}{Example}{Examples}
\crefname{rem}{Remark}{Remarks}
\crefname{figure}{Figure}{Figures}
\crefname{section}{Section}{Sections}
\title{5-Coloring Planar Graphs with a Color Class of Order at Most $|V|/6$}
\author{
    Yuta Inoue\thanks{The University of Tokyo, Tokyo, Japan, \texttt{yutainoue@is.s.u-tokyo.ac.jp}.}
    \and
    Ken-ichi Kawarabayashi\thanks{National Institute of Informatics \& The University of Tokyo, Tokyo, Japan, \texttt{k\_keniti@nii.ac.jp}.}
    \and
    Atsuyuki Miyashita\thanks{The University of Tokyo, Tokyo, Japan, \texttt{miyashita-atsuyuki869@is.s.u-tokyo.ac.jp}.}
}
\date{\today}
\begin{document}

\maketitle

\begin{abstract}
    We show that any planar graph $G=(V,E)$ has a 5-coloring such that one color class contains at most $|V|/6$ vertices.
    In other words, there exists a partition of $V$ into five independent sets $\{V_1, \cdots, V_5\}$ such that $|V_5| \leq |V| / 6$.
    Our proof yields an $O(|V|^2)$-time algorithm to find such a partition, and unlike the Four Color Theorem, our proof is fully verifiable without computer assistance.
\end{abstract}

\section{Introduction}

The Four Color Theorem (4CT), first shown by Appel and Haken \cite{4ct1,4ct2}, states that any loopless planar graph is 4-colorable. 
A simplified proof was later provided by Robertson et al.~\cite{RSST}. 
Due to the vast number of cases involved, the verification of the proof relies on computer programs, and to date, no proof independent of computer assistance is known.
This heavy reliance on computational verification is characteristic of much of the research that stems from the 4CT, such as \cite{edwards2016doublecross}, \cite{inoue2024projective}, and \cite{inoue2025toroidal}.
In contrast, the Five Color Theorem (5CT) \cite{heawood1890} admits a very short, human-verifiable proof \cite{MT}.

This paper aims to bridge the gap between the 4CT and 5CT by solving an intermediate problem: one that is more challenging than the 5CT but does not require computer assistance for its proof. 
One existing line of research in this direction focuses on finding large independent sets in planar graphs. 
The 5CT trivially guarantees an independent set of size at least $|V|/5$. 
This bound was improved by Albertson \cite{Albertson1976} to $2|V|/9$, and more recently to $3|V|/13$ by Cranston and Rabern \cite{Cranston2016}, both without the aid of computers. 
Other related work has focused on strengthening the 5CT itself. 
For instance, Thomassen \cite{MT,THOMASSEN1994180} provided an elegant proof of the 5-list-coloring theorem, and Chiba, Nishizeki, and Saito \cite{CHIBA1981317} developed a linear-time algorithm for 5-coloring planar graphs.

We propose a different approach to generalizing the 5CT. 
Recall that a \emph{5-coloring} of a simple planar graph $G=(V,E)$ is a partition of its vertices into five independent sets, or \emph{color classes}, $\{V_1, \dots, V_5\}$. 
Our goal is to prove the existence of a 5-coloring where the size of the smallest color class (say $V_5$) is bounded.
While the pigeonhole principle and 5CT imply a trivial bound of $|V_5| \leq \frac{1}{5}|V|$, 4CT implies that one color class can be empty ($|V_5|=0$). 
To bridge this gap, we seek to establish a non-trivial bound, $|V_5| \leq k|V|$ for some constant $k < 1/5$, using a proof that is entirely human-checkable. 
We aim for a proof that, while more technical than that of the 5CT, is significantly simpler than the proof of the 4CT.

We answer this affirmatively for $k = \frac{1}{6}$, giving a relatively simple and entirely human-checkable proof of the following theorem.

\begin{thm}\label{main}
     Any planar graph $G=(V,E)$ has a 5-coloring such that one color class contains at most $|V|/6$ vertices.
    In other words, there exists a partition of $V$ into five independent sets $\{V_1, \cdots, V_5\}$ such that $|V_5| \leq |V| / 6$.
\end{thm}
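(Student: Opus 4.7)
My plan is to induct on $n = |V|$, combining the classical reductions from the 5CT proof for low-degree vertices with a new structural argument when $\delta(G) = 5$.

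For the low-degree reduction, suppose $G$ has a vertex $v$ with $\deg(v) \le 4$. I would apply induction to $G - v$ to obtain a 5-coloring with $|V_5| \le (n-1)/6$, then extend by coloring $v$ with a color in $\{1,2,3,4\}$. If $\deg(v) \le 3$ some such color is automatically free. If $\deg(v) = 4$ and all of $\{1,2,3,4\}$ appear at the neighbors, the classical Kempe-chain swap between two opposite neighbors of $v$ (using only colors from $\{1,2,3,4\}$, leaving color $5$ untouched) frees one of these colors. In either sub-case $V_5$ does not grow, so $|V_5| \le (n-1)/6 \le n/6$.

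The core case is $\delta(G) \ge 5$. Here I aim to identify an independent set $S \subseteq V$ with $|S| \le n/6$ whose removal leaves a graph $G - S$ that is 4-colorable \emph{without} invoking the Four Color Theorem; setting $V_5 := S$ and extending a 4-coloring of $G - S$ then completes the proof. The cleanest target is to arrange $\Delta(G - S) \le 4$, because a planar graph of maximum degree $4$ contains no $K_5$ and so is 4-colorable by Brooks' theorem (odd-cycle components are trivially 3-colorable). This amounts to requiring every $v \notin S$ to satisfy $|N(v) \cap S| \ge \deg_G(v) - 4$, so $S$ must independently absorb the excess degree at every high-degree vertex. Since $\delta(G) \ge 5$, Euler's formula ($\sum_v \deg(v) \le 6n-12$) forces at least $12$ vertices of degree exactly $5$, and a discharging scheme with initial charge $\mu(v) = \deg(v) - 6$ (total charge $\le -12$) is my intended tool for locating the degree-$5$ pool from which $S$ will be drawn, with positive charges at higher-degree vertices routed toward nearby degree-$5$ candidates to mark where $S$ must be placed.

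The main obstacle is constructing $S$ subject to all three requirements simultaneously: independence, the size bound $|S| \le n/6$, and the excess-degree domination condition. The bound $n/6$ is tight --- the icosahedron saturates it, and does so only because it happens to admit a perfect independent dominating set; a generic planar triangulation need not. I therefore expect the bulk of the technical work to consist of a reducibility-style case analysis of the unavoidable local configurations around degree-$5$ vertices in minimum-degree-$5$ planar triangulations, verified configuration by configuration in the spirit of a scaled-down 5CT proof, but entirely human-checkable and yielding the $O(|V|^2)$ algorithm promised in the abstract.
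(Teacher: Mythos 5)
Your degree-$\le 4$ reduction matches the paper's, and your instinct to run a discharging argument with charge $d_G(v)-6$ on minimum-degree-$5$ triangulations is also the right one. But the core of your plan --- finding an independent set $S$ with $|S|\le n/6$ such that $\Delta(G-S)\le 4$ and then invoking Brooks --- fails for a concrete structural reason. If $v\notin S$ has $d_G(v)=d$, you need $|N(v)\cap S|\ge d-4$; but in a triangulation the neighborhood of $v$ induces a $d$-cycle, so an independent set meets it in at most $\lfloor d/2\rfloor$ vertices, and $\lfloor d/2\rfloor < d-4$ for every $d\ge 9$. Hence every vertex of degree at least $9$ would have to be placed in $S$ itself, which is impossible as soon as two such vertices are adjacent (and minimum-degree-$5$ triangulations with adjacent high-degree vertices certainly exist). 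There is also a global counting obstruction even when all vertices outside $S$ have degree $6$: each such vertex needs two neighbors in $S$, so $e(S,V\setminus S)\ge 2(n-|S|)$, while if $S$ is drawn from the degree-$5$ pool (which Euler's formula only guarantees to contain $12$ vertices, e.g.\ in geodesic-sphere triangulations) one gets $e(S,V\setminus S)\le 5|S|$, a contradiction for large $n$. So ``$V_5$ absorbs all excess degree'' is far too strong a target; the bound $|V_5|\le n/6$ cannot be certified by making $G-V_5$ globally low-degree.

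The paper's mechanism is local rather than global. It takes a minimal counterexample, exhibits an unavoidable set of small configurations $H$ with $|V(H)|\ge 6$ (via discharging very close to what you sketch), deletes $V(H)$, colors $G-V(H)$ by minimality, and then colors $H$ using at most one fifth-color vertex --- so the $1/6$ ratio is paid configuration by configuration, not by one independent set chosen in advance. Crucially, the fifth-colored vertex is chosen adaptively from a short ``trial sequence'' in $H$: if a candidate already sees color $5$ outside $H$, that adjacency lowers its effective degree and one falls back to the next candidate; the remaining vertices of $H$ are then $4$-colored by sequentially deleting vertices of effective degree at most $4$, using Kempe exchanges among colors $1$--$4$ only (so the size of the fifth class is unaffected). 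You would need to replace your Brooks step with a local recoloring argument of this kind, carried out relative to the already-colored exterior; as written, your proposal has no viable route to $4$-coloring $G-S$ without the Four Color Theorem.
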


Our proof utilizes the method of reducible configurations and discharging, both of which are central to the proof of the 4CT. 
However, we employ a significantly smaller set of rules and configurations.
Specifically, our argument relies on 10 families of reducible configurations and 5 discharging rules (compared to 633 reducible configurations and 32 rules for the proof of 4CT \cite{RSST}).
On top of this, the reducibility criterion that we employ, which we call \emph{1/6-reducibility}, is significantly easier to check by hand compared to the existing reducibility criteria used in 4CT, such as D-reducibility and C-reducibility.

This paper is organized as follows. 
\Cref{sec:preliminaries} presents basic definitions. 
\Cref{sec:reducibility} defines reducibility and introduces the set of reducible configurations used in our proof. 
\Cref{sec:discharging} details the discharging argument, which completes the proof of Theorem \ref{main}. 
In \cref{sec:algorithm}, we present an algorithm that finds the desired 5-coloring in $O(|V|^2)$ time. 
Finally, \cref{sec:conclusion} offers concluding remarks.

\section{Preliminaries}\label{sec:preliminaries}
Let $G = (V,E)$ be a planar graph that we want to color.
In this paper, we assume that $G$ is loopless (or else it is impossible to obtain a proper coloring) and simple (the removal of one of the duplicated edges will not change its colorability).
Furthermore, we assume $G$ is maximal: no edges can be added to $G$ without breaking its planarity.
This can be done because after adding edges, a coloring of the resulting graph still yields a proper coloring of the original graph.
The embeddings of maximal planar graphs only have triangular faces; hence it is called a \emph{triangulation}.

Since $G$ is a simple triangulation, it is 3-connected (if $|V(G)| \ge 4$) and can be uniquely embedded on the sphere (plane).
We denote the degree of $v$ in $G$ by $d_G(v)$.
We denote the \emph{local rotation} of $v$ in $G$ by $\pi_v$, which is a cyclic permutation of all the edges incident to $v$, ordered counterclockwise on the embedding of $G$.
We denote the collection of all local rotations of $v \in V(G)$ by $\Pi$, which we call the \emph{rotation system}.

We now introduce \emph{configurations}.
A configuration is essentially a subgraph of a triangulation $G$, but it specifies not just the graph structure but also how it is embedded in $G$.

\begin{dfn} \label{dfn:configuration}
    Let $H = (V_H,E_H)$ be a connected subgraph of $G$ (not necessarily induced).
    We call $\mathcal{H} = (H, d_G|_{V_H}, \Pi_H)$ a \emph{configuration} in $G$, where:
    \begin{itemize}
        \item $H$ is called the \emph{underlying graph} of $\mathcal{H}$.
        \item $d_G|_{V_H}$ is the restriction of $d_G: V(G) \to \mathbb{Z}_{\ge0}$ to $V_H$.
        In later sections, we may just write it as $d_G$.
        \item $\Pi_H$ is a collection of local rotations of $\pi_v$ over all $v \in V_H$, but for all edges not in $E_H$, each such edge is replaced by a newly assigned element which we call a \emph{halfedge}. The resulting cyclic permutation is denoted as $\pi_v^H$.
    \end{itemize}
\end{dfn}

An example of a configuration is given in \cref{fig:config-example}.
For the values of $d_G$ for each node in $V_H$, we use the notation introduced by Heesch \cite{heesch1969}, which we show in \cref{fig:heesch}.

\begin{figure}[H]
  \centering
  \includegraphics[width=250pt]{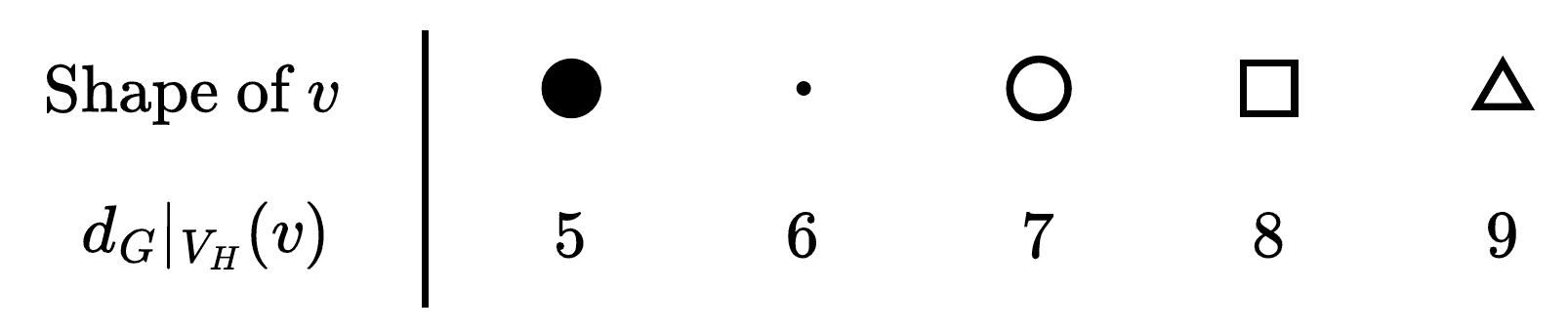}
  \caption{The chart for the Heesch notation. We depict vertices of degree 5 by a filled circle, 6 by a dot, 7 by an unfilled circle, 8 by a square, and 9 by a triangle.}
  \label{fig:heesch}
\end{figure}

\begin{figure}[H]
  \centering
  \includegraphics[width=100pt]{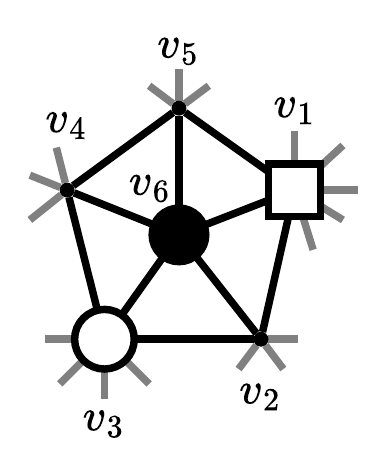}
  \caption{A drawing of an example configuration $\mathcal{H} = (H, d_G|_{V_H}, \Pi|_{V_H})$.
    The vertices are depicted by Heesch notation (corresponding to the value of $d_G$, which is $d_G(v_1) = 8, d_G(v_3) = 7, d_G(v_6) = 5, d_G(v_2) = d_G(v_4) = d_G(v_5) = 6$).
    The gray lines represent halfedges in $\Pi_H$.
    For example, $\pi_{v_1}^H$ is the cyclic permutation $(v_1v_5, v_1v_6, v_1v_2, f_{11}, f_{12}, f_{13}, f_{14}, f_{15})$, $\pi_{v_2}^H$ is the cyclic permutation $(v_2v_1, v_2v_6, v_2v_3, f_{21}, f_{22}, f_{23})$, and so on. (Here, $f_{ij}$ is some halfedge connected to $v_i$).}
  \label{fig:config-example}
\end{figure}

For some pair of configurations $\mathcal{H}_1 = (H_1, d_G|_{V_1}, \Pi|_{H_1})$ and $\mathcal{H}_2 = (H_2, d_G|_{V_2}, \Pi_{H_2})$, even if $H_1$ and $H_2$ are isomorphic and have matching degrees with respect to $d_G|_{V_1}$ and $d_G|_{V_2}$, if the local rotation $\Pi_{H_1}$ and $\Pi_{H_2}$ differ, we must distinguish these two configurations.
This is especially important when $H_1, H_2$ both have a cutvertex, as shown in \cref{fig:config-pi-difference}.

\begin{figure}[H]
  \centering
  \includegraphics[width=150pt]{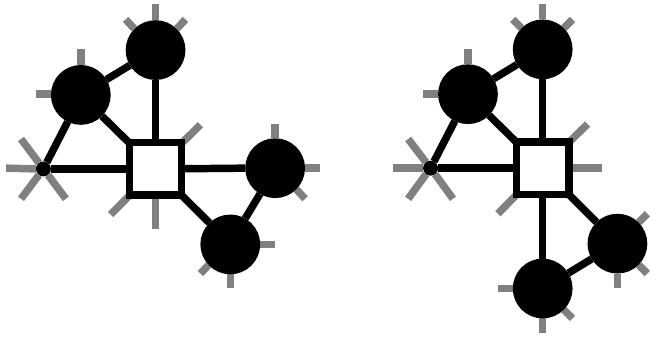}
  \caption{An example of two configurations having isomorphic graphs and matching degrees but different local rotation on the central vertex.}
  \label{fig:config-pi-difference}
\end{figure}

On the other hand, if $H$ does not contain a cutvertex, there is a trivial local rotation for all of the vertices (excluding symmetry).
In these cases, we may omit $\Pi_H$ and just say $\mathcal{H} = (H, d_G)$ is a configuration. 

\section{Reducibility}\label{sec:reducibility}

Our goal is to find a configuration $\mathcal{H} = (H, d_G, \Pi_H)$ in $G$ so that the vertices in $H$ can always be colored given that the vertices in $G - H$ are pre-colored. (For now, let us consider the problem with only four colors.)
This property is referred to as \emph{reducibility}\footnote{In 4CT (and similar works), specific types of reducibility such as \emph{D-reducibility} and \emph{C-reducibility} are checked for various configurations. Due to its complexity, these reducibility criteria are very tedious and cumbersome to prove without the aid of computer programs, so we do not use them at all in this paper.} of graphs in previous works.
The reducibility of a single vertex of small degree is straightforward. If a vertex $v$ has $d_G(v) \leq 3$, it can be colored with a color not used by its neighbors.

When $G$ has a single vertex $v$ of $d_G(v) = 4$, we try to color $v$ with an available color, but this may not be possible when the four neighbors all are assigned to exactly one of the four colors (that is, colors that are not the fifth color).
In this case, we utilize \emph{Kempe chains}.
$i,j$-Kempe chains are connected components of a colored graph induced by the union of two color classes $V_i$ and $V_j$.
By reassigning all vertices in a particular $i,j$-Kempe chain that are colored in $i$ to $j$ and vice versa (commonly known as the \emph{Kempe change}), we can obtain a different proper coloring of $G$.

The key observation here is that two Kempe chains, each being a $i,j$-Kempe chain and a $k,l$-Kempe chain respectively (assuming $i,j,k,l$ are all different colors) do not overlap, so when a four cycle $v_1v_2v_3v_4$ exists in a colored planar graph $G$ such that they all are assigned different colors, it is not possible that the diagonally crossed pairs $v_1,v_3$ and $v_2,v_4$ are both connected via Kempe chains (which would violate planarity of $G$). 
Using this technique, one can alter the color of only a certain neighboring vertex around $v$.
After the Kempe changes, the neighbors of $v$ only have three distinct colors assigned, and hence $v$ can be colored with an unused color from the four colors.

This method of reducing vertices of degree four or less can be applied multiple times.
For example, a configuration $\mathcal{H}$ of two vertices $v_1, v_2$ with $d_G(v_1) = 4, d_G(v_2) = 5$ can be regarded as reducible, since after deleting $v_1$ from $G$ (where $\mathcal{H}$ appears), $v_2$ will have degree four in the remaining graph $G - v_1$.
We generalize this method and define the following reducibility criteria called \emph{$0$-reducibility}. 
(The naming "0"-reducibility comes from the fact that the vertices inside $H$ can be colored without using the fifth color, in contrast to the $1/6$-reducibility that we will define later.)

\begin{dfn}\label{dfn:0-reducibility}
    Let $(H, d_G, \Pi_H)$ be a configuration in $G$.
    We say it is \emph{$0$-reducible} if the vertices in $H$ can be sequentially deleted so that all vertices are of degree at most four at the time of deletion.
\end{dfn}

Now, let us introduce the fifth color.

In this paper, we aim to 5-color the entire graph $G = (V,E)$ with at most $|V| / 6$ vertices colored with the fifth color.
Hence, a reducibility of a configuration suffices with 5-coloring the vertices in the underlying graph $H$ with at most $\lfloor |H| / 6 \rfloor$ vertices colored with the fifth color.
When $|V(H)| \geq 6$, the impact is massive: we are allowed to assign some vertex in $H$ with the fifth color, relaxing the coloring constraint significantly.
Therefore, we define a new, more lax reducibility criterion for these graphs, which we name \emph{1/6-reducibility}.

The key idea here is that we introduce a sequence of vertices that we color the fifth color, each being the backup plan in case all vertices preceding fail to be colored by the fifth color.
We believe following an example is best suited for laying out the ideas, so we do this first and state the formal definitions afterwards.

Let us consider the configuration drawn in \cref{fig:config-example}, where $H$ is a graph of order $6$, and $d_G(v_1),\cdots,d_G(v_6)$ equals $8,6,7,6,6,5$, respectively. 
(We encourage the reader to refer to \cref{fig:1/6-example} as well while reading this passage.)

Let us assume there already is a 5-coloring of the vertices in $ G-V (H)$.
Then, we first consider $v_1$, which has degree $d_G(v_1) = 8$.
$v_1$ has three neighbors inside $H$, so there must be five neighbors outside $H$.
Among those five neighbors, if there are no vertices colored with the fifth color, we can assign the fifth color to $v_1$.
Then, the rest of the graph $H_1$ (with the degree function $d_{G-v_1}$ mapping $v_2, \cdots, v_6$ to $5,7,6,5,4$, respectively) is $0$-reducible and can be 4-colored.
\footnote{Here we use Kempe changes, which potentially alter the sizes of the color classes dramatically. 
However, we never use Kempe chains that involve the fifth color, so the bound on the size of this color class remains intact.}

Conversely, if $v_1$ has an outer neighbor colored with the fifth color, we look at another vertex, $v_3$ of degree $d_G(v_3) = 7$.
$v_3$ also has three neighbors inside $H$, so there must be four neighbors outside $H$.
Among those four neighbors, if there are no vertices colored with the fifth color, we can assign the fifth color to $v_3$.
In this case, the rest of the graph $H_3$ (with $d_{G - v_3}$ mapping $v_1, v_2, v_4, v_5, v_6$ to $8,5,5,6,4$, respectively) is unfortunately not $0$-reducible, because of $v_1$ having too large a degree.
However, in this case, we still have a way of proving reducibility. 
Deleting vertices of degree four in $H$ recursively, we are left with one vertex $v_1$ having degree $5$.
Remember that $v_1$ had a neighbor (outside $H$) having the fifth color. (Say, $u_1$.)
We want to color $v_1$ with one of the four colors, and we have at most four neighbors of $v_1$ that are assigned a non-fifth color. 
Therefore, $v_1$ can essentially be regarded as a degree $4$ vertex, and the Kempe change strategy we discussed with degree 4 vertices works here as well!
Using the reducibility terms, one can say that the graph $H_3$ with $d_{G - v_3 - u_1}$ ($u_1$ being the vertex adjacent to $v_1$ colored with the fifth color) assigning $v_1, v_2,v_4,v_5,v_6$ to $7,5,5,6,4$, respectively, is $0$-reducible. 

We proceed to the case where both $v_1$ and $v_3$ cannot be colored with the fifth color due to their outer neighbors.
We now start with $v_2$ and color this with the fifth color, if possible.
The remaining graph (with the degrees in $G - v_2 - u_1 - u_3$, where $u_3$ is a vertex adjacent to $v_3$ colored with the fifth color) is 0-reducible.
(Here, $u_3$ may be identical to $u_1$, but the same argument holds.)

The final case is when $v_1,v_3,v_2$ all have neighboring vertices colored with the fifth color.
In this case, we choose $v_6$ as the vertex to color with the fifth color.
The remaining graph (with the degrees in $G - v_6 - u_1 - u_3 - u_2$, where $u_2$ is a vertex adjacent to $v_2$ colored with the fifth color) is 0-reducible, and fortunately, $v_6$ does not have any neighbors outside $H$, meaning that we can always color $v_6$ regardless of the colorings outside $H$.
(Again, $u_2$ may be identical to $u_1$ or $u_3$, but the same argument holds.)
Hence, we finally have the scheme to color the vertices in $H$ with at most one vertex having the fifth color, regardless of how the vertices in $G-H$ are colored.

We depict the discussion above as graph images in \cref{fig:1/6-example}.

\begin{figure}[H]
  \centering
  \includegraphics[width=300pt]{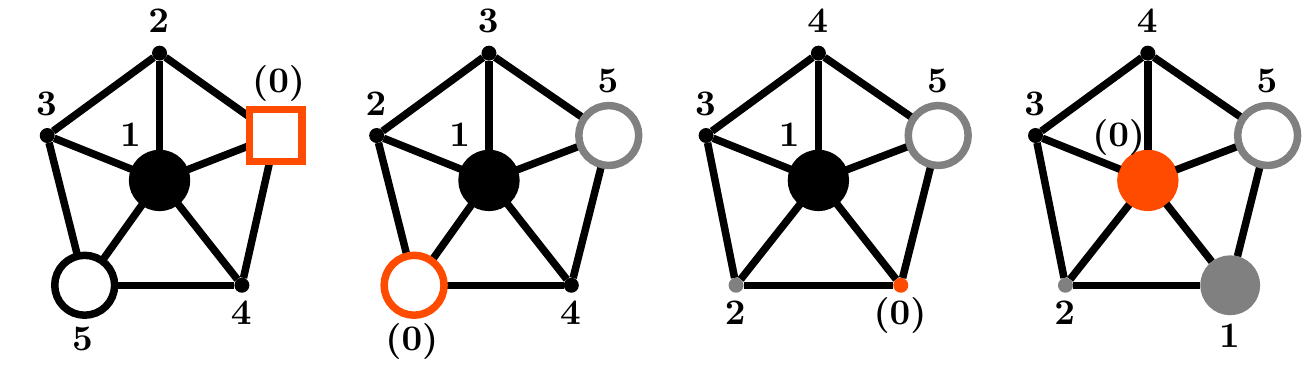}
  \caption{A visualization of how the example configuration should be 5-colored, shown from left to right.
  The red vertex (labeled \textbf{(0)}) of each graph is the vertex that we color with the fifth color, and the gray vertices are the vertices that we know have a neighboring vertex of the fifth color (hence, having $1$ less degree compared to the previous graphs).
  We can confirm that all the non-red-vertices form a 0-reducible configuration, by deleting the 5 vertices in labeled order from $\mathbf{1}$ to $\mathbf{5}$ and by seeing that they are degree 4 or less at the time of deletion.}
  \label{fig:1/6-example}
\end{figure}

1/6-reducibility can be defined formally as follows.

\begin{dfn}\label{dfn:1/6-reducibility}
    Let $\mathcal{H} = (H, d_G, \Pi_H)$ be a configuration with $|V(H)| \ge 6$. 
    We say $\mathcal{H}$ is \emph{$1/6$-reducible} if there exists a finite sequence called the \emph{trial sequence} of its vertices $(v_1, v_2, \cdots, v_k)$ that guarantees a valid 5-coloring of $H$ with at most one vertex using the fifth color, given any 5-coloring of $G - V(H)$.
    
    The coloring strategy is as follows: For a given coloring of $G - V(H)$, let $U_5$ be the set of vertices in $G - V(H)$ with the fifth color.
    \begin{enumerate}
        \item Find the first vertex $v_i$ in the trial sequence that $v_i$ is not adjacent to any vertex in $U_5$.
        \item If such a vertex $v_i$ is found, color it with the fifth color.
        The proof of reducibility requires showing that the remaining $H - v_i$ is 0-reducible (and thus 4-colorable), considering that for every $j < i$, the vertex $v_j$ has its degree effectively lowered by its adjacency to at least one vertex in $U_5$.
        \item If all vertices $v_1, \cdots, v_k$ are adjacent to a vertex in $U_5$, $H$ as a whole must be $0$-reducible (and 4-colorable) after the degrees are lowered by the removal of vertices in $U_5$.
    \end{enumerate}
\end{dfn}

To conclude, we introduce one lemma that directly follows from the definition of $0$ and $1/6$-reducibility. 

\begin{lem}
    Let there exist two isomorphic graphs $H$ and $H'$ such that $d_G(v) \leq d_G(v')$ for every corresponding vertex pair $v \in V_H, v' \in V_{H'}$.
    Then, if the configuration $(H', d_G|_{V_{H'}}, \Pi_{H'})$ is $0$ or $1/6$-reducible, the configuration $(H, d_{G}|_{V_H}, \Pi_{H})$ is also $0$ or $1/6$-reducible, respectively.
    \label{lem:dominance}
\end{lem}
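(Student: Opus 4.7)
The plan is to reuse the reduction certificate of $\mathcal{H}'$ for $\mathcal{H}$ under the given isomorphism, then observe that both the $0$-reducibility and the $1/6$-reducibility conditions are monotone in the degree labels, so the smaller labels $d_G(v) \leq d_G(v')$ only make the conditions easier.

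First I would dispatch the $0$-reducibility case. Let $\varphi\colon V_H \to V_{H'}$ be the graph isomorphism and let $(u'_1, \ldots, u'_m)$ be a deletion sequence witnessing $0$-reducibility of $\mathcal{H}'$. Use the sequence $u_j := \varphi^{-1}(u'_j)$ in $H$. At step $j$, the residual degree of $u_j$ equals $d_G(u_j) - r_j$, where $r_j$ counts neighbors of $u_j$ inside $V_H$ already removed; because $\varphi$ preserves adjacency this $r_j$ matches the corresponding count for $u'_j$ in $H'$, while $d_G(u_j) \leq d_G(u'_j)$ by hypothesis. Hence the residual degree in $H$ is bounded above by the (at most $4$) residual degree in $H'$, establishing $0$-reducibility of $\mathcal{H}$.

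Next I would treat the $1/6$-reducibility case by reduction to the $0$-reducibility case just handled. Let $(v'_1, \ldots, v'_k)$ be a trial sequence witnessing $1/6$-reducibility of $\mathcal{H}'$ and set $v_i := \varphi^{-1}(v'_i)$. By \cref{dfn:1/6-reducibility}, verifying that $(v_1, \ldots, v_k)$ is a valid trial sequence for $\mathcal{H}$ amounts to checking, for each $i \in \{1, \ldots, k\}$, that $H - v_i$ is $0$-reducible under the degree function $d_G$ with $1$ subtracted from every $v_j$ with $j < i$, together with the analogous ``all-fail'' check on $H$ itself (with $1$ subtracted from every vertex in the trial sequence). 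The corresponding checks on the $H'$ side follow directly from the $1/6$-reducibility of $\mathcal{H}'$; and because $\varphi$ is a graph isomorphism, the modified degree function on the $H$ side is pointwise at most the modified degree function on the $H'$ side (the subtractions applied on the two sides are identical under $\varphi$, while the original labels already satisfy $d_G(v) \leq d_G(v')$). Applying the $0$-reducibility conclusion above to each such paired pair of configurations completes the argument.

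The main subtlety, such as it is, lies in recognizing that $1/6$-reducibility factors cleanly through $0$-reducibility plus a fixed set of degree subtractions determined only by the trial sequence, so that the dominance can be lifted from $0$-reducibility (where it is immediate) to $1/6$-reducibility without ever revisiting the underlying Kempe-chain or fifth-color strategy. Once this viewpoint is adopted, no delicate analysis of adversarial external colorings is needed, and the entire proof is a direct application of the $0$-reducibility case to each subconfiguration dictated by \cref{dfn:1/6-reducibility}.
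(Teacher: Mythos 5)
Your proposal is correct and matches the paper's intent: the paper offers no written proof, asserting that the lemma ``directly follows from the definition,'' and your argument is precisely the natural elaboration --- transfer the deletion sequence (resp.\ trial sequence) along the isomorphism and use that residual degrees are monotone in the original degree labels. The key observation you highlight, that $1/6$-reducibility factors through a fixed family of $0$-reducibility checks with prescribed degree subtractions so the Kempe-chain machinery never needs revisiting, is exactly why the authors felt entitled to omit the proof.
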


\subsection{List of \texorpdfstring{$0$, $1/6$}{0, 1/6}-reducible configurations}
We now prove the reducibility of several configurations.

The first lemma is for $0$-reducibility and is trivial.

\begin{lem}\label{lem:four-degree}
    Let $H = (\{v\}, \varnothing)$ be a singleton vertex of degree at most four.
    Then, $(H, d_G)$ is $0$-reducible.
\end{lem}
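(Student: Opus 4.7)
The plan is to unwind Definition~\ref{dfn:0-reducibility} directly. Since $H$ contains only the single vertex $v$, the only candidate deletion sequence is the length-one sequence $(v)$, so the sole thing to verify is that $v$ has degree at most four at the moment it is deleted. Because no other vertex of $H$ precedes $v$ in this sequence, the degree of $v$ at the time of deletion coincides with $d_G(v)$, which is at most four by hypothesis. This exhausts the requirement in the definition, so the configuration $(H, d_G)$ is $0$-reducible.

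I expect no real obstacle here: this lemma serves as the base case around which the sequential-deletion procedure of Definition~\ref{dfn:0-reducibility} is built, so the argument reduces to matching the hypothesis against the criterion once. The only incidental points worth noting are that the configuration data $\Pi_H$ never enters the verification, since a singleton vertex admits only a trivial local rotation consisting entirely of halfedges, and that the $0$-reducibility criterion is framed purely in terms of degrees. The underlying justification for why such a low-degree $v$ can in fact be properly colored, once $G - v$ has been $4$-colored, is the Kempe-chain argument already recalled at the start of Section~\ref{sec:reducibility} and need not be repeated in the proof of this lemma.
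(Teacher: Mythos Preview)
Your proposal is correct and matches the paper's treatment: the paper simply declares this lemma ``trivial'' without giving a separate proof, and your one-step verification of Definition~\ref{dfn:0-reducibility} is precisely the intended argument.
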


For the upcoming $1/6$-reducible configurations, we provide a figure for each and every configuration.
The images are equipped with the information of the trial sequence, labeled with parenthetical numbers. ((1), (2), etc.)

\begin{lem}\label{lem:five-wheel}
    Let $H$ be a graph containing 6 vertices $v_1, \cdots, v_6$ such that $v_1, \cdots, v_5$ induces a 5-cycle and $v_6$ is adjacent to all of $v_1, \cdots, v_5$.
    Let $d_G(v_6) = 5$, and the multiset $\{d_G(v_1), \cdots, d_G(v_5)\}$ equals to $\{6,6,6,7,8\}$.
    Then, $(H, d_G)$ is $1/6$-reducible.
\end{lem}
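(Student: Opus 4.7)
The plan is to exhibit an explicit trial sequence for $\mathcal{H}$ and to verify $0$-reducibility of each residual subconfiguration by giving a peeling order. Write $v_6$ for the central vertex (degree $5$) and $v_1, \ldots, v_5$ for the outer cycle in cyclic order. The crucial observation is that every $G$-neighbor of $v_6$ lies inside $H$, so $v_6$ has no neighbor in $G - V(H)$ and therefore none in $U_5$. Hence $v_6$ can always be assigned the fifth color, which makes it a safe terminal element of any trial sequence.

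Up to rotation and reflection of the wheel, there are exactly two placements of the degree multiset $\{6,6,6,7,8\}$ around the outer cycle: the degree-$8$ and degree-$7$ vertices are either adjacent on the cycle (Case A) or separated by a single degree-$6$ vertex (Case B). In each case I would use a trial sequence of length four whose first three elements are, in order, the degree-$8$ vertex, the degree-$7$ vertex, and a degree-$6$ vertex that is a cycle-neighbor of the degree-$7$ vertex, followed by $v_6$. Each of the four possible success positions in this sequence yields a residual five-vertex configuration, for which I would supply an explicit deletion order. The generic pattern is to peel $v_6$ first (its current degree drops to $4$ as soon as any neighbor has been removed), then peel the remaining degree-$6$ cycle vertices in an order that successively strips neighbors off the degree-$7$ and degree-$8$ vertices, and finally the degree-$7$ and degree-$8$ vertices themselves; by that point each of them has current degree at most $5$, and the $-1$ from its guaranteed $U_5$-adjacency (inherited from its earlier position in the trial sequence) brings the effective degree down to the $0$-reducibility threshold of $4$.

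The main obstacle is Case B, where the degree-$7$ vertex is not cycle-adjacent to the degree-$8$ vertex. If the third trial element were chosen as a degree-$6$ vertex that is not a cycle-neighbor of the degree-$7$ vertex, then after peeling $v_6$ and that degree-$6$ vertex the degree-$7$ vertex would still have current degree $5$ with no applicable effective reduction, and peeling would stall. Making the third trial element a common cycle-neighbor of the degree-$7$ and degree-$8$ vertices (which exists in both cases) removes this obstruction: the degree-$7$ vertex then always loses an internal neighbor from the trial sequence, so its effective degree hits $4$ at the correct moment. With this choice of trial sequence the $0$-reducibility checks go through by direct computation in all four sub-cases of both arrangements.
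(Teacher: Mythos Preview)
Your approach is essentially the paper's: reduce to the two symmetry classes (adjacent vs.\ separated) and use the trial sequence $(\text{deg-}8,\ \text{deg-}7,\ \text{deg-}6,\ v_6)$, with $v_6$ as the guaranteed terminal because all its neighbors lie in $H$. The paper does exactly this, referring back to its worked example for Case~B and noting that in Case~A any degree-$6$ vertex serves as the third element.

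One small slip: you assert that a \emph{common} cycle-neighbor of the degree-$7$ and degree-$8$ vertices ``exists in both cases,'' but in Case~A the two are adjacent on the $5$-cycle and share no degree-$6$ cycle-neighbor. This does no damage---your earlier specification ``a degree-$6$ cycle-neighbor of the degree-$7$ vertex'' is already enough, and in fact (as the paper states for Case~A) \emph{any} degree-$6$ vertex works as the third element, so the ``obstacle'' you describe is not actually an obstruction once you peel around the cycle starting from the vertex that lost a neighbor.
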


\begin{proof}
    By rotation and inversion symmetry, we only need to check for the following two cases: when the degree 8 vertex and the degree 7 vertex are adjacent, and when they are separated by a degree 6 vertex (both cases drawn in \cref{fig:five-wheel}).
    The case where they are separated is exactly the example we have shown in the Preliminaries in \cref{fig:1/6-example,fig:config-example}.
    The case where the two vertices are adjacent can be proved similarly, by using the trial sequence $v_1, v_2,v_3,v_6$, where $d_G(v_1) = 8, d_G(v_2) = 7, d_G(v_6) = 5$, and $v_3$ is any vertex of degree 6.
\end{proof}

\begin{figure}[H]
  \centering
  \includegraphics[width=60pt]{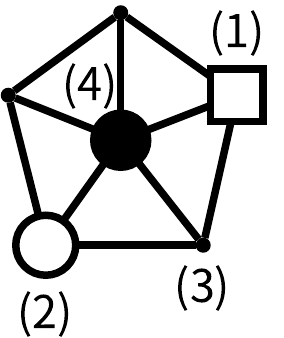}
  \hspace{25pt}
  \includegraphics[width=60pt]{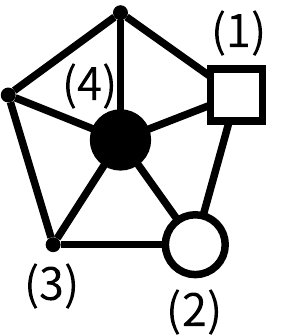}
  \caption{The two cases to consider in \cref{lem:five-wheel}.}
  \label{fig:five-wheel}
\end{figure}

\begin{lem} \label{lem:seven-with-four-fives-one-eight}
    Let $H$ be a graph that consists of 6 vertices $v_1, \cdots, v_6$ such that $d_G(v_1) = 7$, $d_G(v_2) = 8$, $d_G(v_3), \cdots, d_G(v_6) = 5$, and $v_2,\cdots, v_6$ are all neighbors of $v_1$.
    Additionally, let $v_3, v_4$ be neighbors of $v_2$.
    Then, $H$ is $1/6$-reducible.
\end{lem}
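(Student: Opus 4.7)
The plan is to exhibit an explicit trial sequence establishing $1/6$-reducibility. Since $v_2$ has the most $H$-neighbors (three: $v_1, v_3, v_4$), removing it lowers several other degrees at once, so the trial sequence begins with $v_2$. I verify that $H - v_2$ is $0$-reducible via the deletion order $v_3, v_4, v_1, v_5, v_6$: in $G - v_2$, each of $v_3, v_4$ has degree $5 - 1 = 4$; then $v_1$ has degree $7 - 3 = 4$ after losing $v_2, v_3, v_4$; finally $v_5$ and $v_6$ drop to $5 - 1 = 4$ once $v_1$ is removed.

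The second trial vertex is $v_1$, chosen because $v_1$ has only two neighbors outside $H$ and is therefore unlikely to be adjacent to $U_5$. Assuming $v_2$ failed (so $v_2$ has a $U_5$-neighbor, reducing its effective degree by one), the graph $H - v_1$ is $0$-reducible via $v_3, v_4, v_5, v_6, v_2$: each of the four degree-$5$ vertices begins at $5 - 1 = 4$ in $G - v_1$, and $v_2$ ends at effective degree $8 - 1 - 3 = 4$ after losing $v_1, v_3, v_4$ and one $U_5$-neighbor.

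The hard part is the case where both $v_2$ and $v_1$ are adjacent to $U_5$. Extending the trial sequence by a single vertex from $\{v_3, v_4, v_5, v_6\}$ naively fails: with only $v_1, v_2$ reduced, the remaining degree-$5$ vertices cannot be deleted, because their sole $H$-neighbor $v_1$ still has effective degree $5$. To break this deadlock I use that $G$ is a triangulation, so consecutive neighbors of $v_1$ in its rotation are joined by implicit edges of $G$. Because the block $v_3, v_2, v_4$ occupies three consecutive slots of $v_1$'s rotation, the remaining four slots are filled by $v_5, v_6$ and two external vertices; a brief case analysis on this placement (enumerating the $\binom{4}{2}$ positionings up to swapping $v_5 \leftrightarrow v_6$) shows that in every case at least one of the pairs $\{v_3, v_5\}, \{v_3, v_6\}, \{v_4, v_5\}, \{v_4, v_6\}, \{v_5, v_6\}$ is an edge of $G$. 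The trial sequence is then extended to exploit this implicit edge---e.g.\ $(v_2, v_1, v_5, v_6)$ when $v_4 v_5 \in E(G)$, or $(v_2, v_1, v_6, v_5)$ when $v_3 v_6 \in E(G)$---and the intermediate step in which one of $v_5, v_6$ is colored $5$ is verified by the same degree bookkeeping, using the implicit edge to drop one neighbor of the colored vertex to degree $4$ and unlock the chain.

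The main obstacle is this case analysis on the rotation: each placement of $v_5, v_6$ requires its own intermediate-step verification. The all-fail step, by contrast, is uniformly easy, because all of $v_3, v_4, v_5, v_6$ are then reduced to degree $4$ and can be deleted first, after which $v_1$ drops to effective degree $\leq 4$ and $v_2$ follows.
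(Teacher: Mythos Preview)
Your approach is correct and essentially matches the paper's: both begin the trial sequence with $(v_2, v_1)$, and both hinge on the observation that, once the block $v_3, v_2, v_4$ is placed in $v_1$'s rotation, the remaining four slots force at least one pair among $\{v_3,v_4,v_5,v_6\}$ to be adjacent in $G$. The paper states this as ``$v_i$ is one of the vertices of degree~$5$ adjacent to another vertex of degree~$5$ in $H$'' and uses the length-$3$ trial sequence $(v_2, v_1, v_i)$ uniformly across the four rotation cases. Your length-$4$ sequence $(v_2, v_1, v_5, v_6)$ (ordered so that the third entry touches the implicit edge) works for the same reason but is one step longer than necessary: once $v_i$ is chosen adjacent to some degree-$5$ vertex $v_j$, the all-fail step of $(v_2, v_1, v_i)$ already succeeds, because $v_i$ drops to $4$, then $v_j$, then $v_1$ (now $7-3=4$), then the remaining two degree-$5$ vertices, then $v_2$.

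One small inaccuracy: in your all-fail paragraph you write that ``all of $v_3, v_4, v_5, v_6$ are then reduced to degree~$4$.'' With trial sequence $(v_2, v_1, v_5, v_6)$, only $v_1, v_2, v_5, v_6$ are known to have $U_5$-neighbours; $v_3$ and $v_4$ are not in the sequence and are \emph{not} directly reduced. The deletion still works, but for a different reason: delete $v_5, v_6$ (each at $\le 4$), then $v_1$ (now $7-1-2=4$), and only then do $v_3, v_4$ drop to~$4$ via the loss of $v_1$. The conclusion stands; just tighten the justification.
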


\begin{proof}
    By \cref{dfn:configuration}, the overall structure of the configuration is one of the graphs drawn in \cref{fig:seven-with-four-fives-one-eight}.
    In either one of these cases, $1/6$-reducibility can be shown by the trial sequence $v_2, v_1, v_i$ (where $v_i$ is one of the vertices of degree 5 adjacent to another vertex of degree 5 in $H$).
\end{proof}

\begin{figure}[H]
  \centering

  \includegraphics[width=240pt]{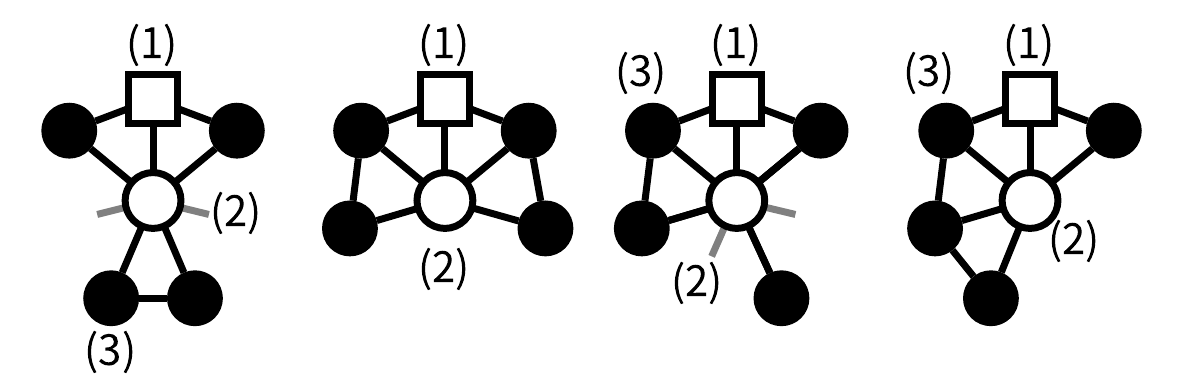}
  \caption{The four cases to consider in \cref{lem:seven-with-four-fives-one-eight}.}
  \label{fig:seven-with-four-fives-one-eight}
\end{figure}

\begin{lem} \label{lem:five-with-four-fives-plus-alpha}
    Let $H$ be a graph that consists of 6 vertices $v_1, \cdots, v_6$, such that $v_1$ is adjacent to all of $v_2, \cdots, v_5$, $v_2v_3v_4v_5$ is a path of order four, and $v_6$ is connected in such a way that $\mathcal{H} = (H, d_G, \Pi)$ is one of the three configurations drawn in \cref{fig:five-with-four-fives-plus-alpha}.
    Then, $\mathcal{H}$ is 1/6-reducible.
\end{lem}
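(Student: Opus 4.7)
The plan is to treat each of the three configurations in \cref{fig:five-with-four-fives-plus-alpha} separately, in the same style as the proofs of \cref{lem:five-wheel} and \cref{lem:seven-with-four-fives-one-eight}. For each sub-case I would exhibit an explicit trial sequence $(v_{i_1}, \ldots, v_{i_k})$ on the six vertices and check, index by index, the two conditions in \cref{dfn:1/6-reducibility}: namely, that whenever $v_{i_j}$ is the first unblocked trial vertex, the graph $H - v_{i_j}$ is $0$-reducible once each earlier $v_{i_\ell}$ is credited with a degree reduction of one; and, in the terminal case where every trial vertex is blocked, $H$ itself is $0$-reducible after the global degree reductions induced by $U_5$. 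The verifications themselves are then just elementary applications of \cref{lem:four-degree} to a deletion ordering.

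The heuristic for choosing the trial sequence is to place the vertices with the most outside slack first — a vertex $v$ with many neighbors outside $H$ is unlikely to be blocked, and when it is unblocked we get the strongest direct benefit from coloring it with the fifth color. The central vertex $v_1$ (which has four inside neighbors $v_2, v_3, v_4, v_5$) and the attachment $v_6$ are natural candidates to place later in the sequence, since they are more constrained. For each of the three attachment patterns of $v_6$ in \cref{fig:five-with-four-fives-plus-alpha} I would read off the trial sequence from the figure's parenthetical labeling, then exhibit a deletion order for the surviving vertices — typically peeling the path endpoints $v_2, v_5$ first, then $v_3, v_4$, and finally $v_1$ — checking that at each step the current degree is at most $4$ after incorporating the credited reductions.

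The step I expect to be the main obstacle is the terminal \enquote{all-blocked} case. There, distinct trial vertices $v_{i_\ell}$ may share a single outside witness $u \in U_5$, so the total degree reduction available to $H$ is governed by $|N(V(H)) \cap U_5|$ rather than by the length of the trial sequence. I would therefore pick the trial sequence so that even under the worst-case coincidence of witnesses, the guaranteed reductions — combined with the small prescribed degrees on $v_2, \ldots, v_5$ — are enough to drive every remaining vertex down to effective degree at most $4$ in some deletion order. This is where the three sub-cases genuinely differ: the attachment of $v_6$ governs whether $v_1$ loses an interior edge in the reduced graph (and so becomes easy to peel last) or whether $v_6$ itself must be peeled early as a low-degree leaf, and it is precisely this asymmetry that forces the three-case analysis rather than a single unified argument.
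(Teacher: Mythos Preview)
Your overall plan---exhibit a trial sequence and verify the $0$-reducibility conditions of \cref{dfn:1/6-reducibility}---matches the paper, but two points deserve correction.

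First, the paper's proof is a single line: the trial sequence $(v_6, v_1, v_2)$ works uniformly for all three configurations, so no three-way case split is needed. More importantly, your stated heuristic is backwards. You write that $v_1$ and $v_6$ are ``natural candidates to place later in the sequence, since they are more constrained,'' but the figure's labeling (which you also say you would follow) puts them \emph{first}, and this is essential. The high-degree vertices must lead the trial sequence because (i) if one of them is unblocked, coloring it with the fifth color drops the degrees of several of $v_2,\dots,v_5$ at once and makes the remainder trivially $0$-reducible, and (ii) if it \emph{is} blocked, the guaranteed degree-$1$ reduction lands exactly where it is most needed---on a vertex that would otherwise be too large to peel. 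Leading with the degree-$5$ path vertices gives neither benefit: coloring one of them fifth barely helps $v_6$, and a blocked degree-$5$ vertex was already easy to delete.

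Second, your worry about shared witnesses in the all-blocked case is misplaced. If $v_{i_\ell}$ and $v_{i_{\ell'}}$ share a single outside neighbor $u\in U_5$, removing $u$ still reduces \emph{each} of their degrees by one; the reduction is per incident vertex, not per witness. The paper makes exactly this observation in its worked example (``$u_2$ may be identical to $u_1$ or $u_3$, but the same argument holds''). So the terminal case with $(v_6,v_1,v_2)$ is routine: $v_2$ drops to effective degree $4$, and peeling $v_2,v_3,v_4,v_5,v_1,v_6$ in order succeeds.
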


\begin{proof}
    We can show each of its reducibility by the trial sequence $v_6, v_1, v_2$.
\end{proof}

\begin{figure}[H]
  \centering
  \includegraphics[width=200pt]{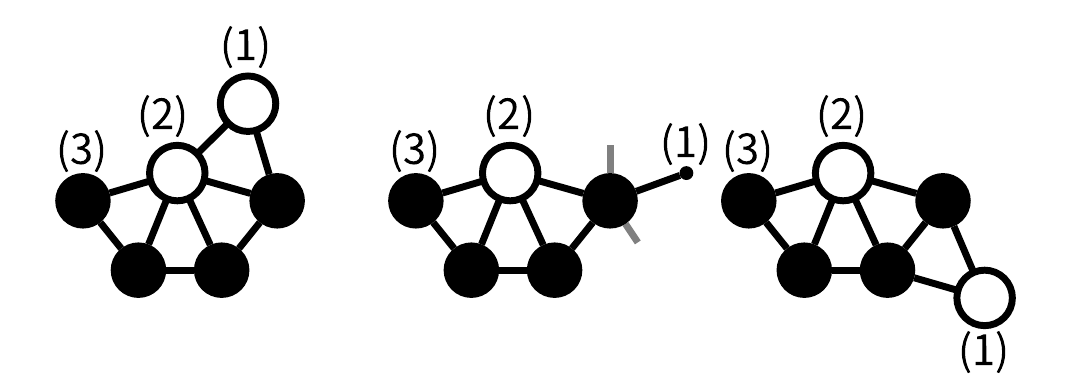}
  \caption{The three cases to consider in \cref{lem:five-with-four-fives-plus-alpha}. (Note that $d_G(v_6)=7$ in the left and right cases, whereas $d_G(v_6) = 6$ in the center case.)}
  \label{fig:five-with-four-fives-plus-alpha}
\end{figure}

\begin{lem} \label{lem:2/3-with-a-five}
    The four configurations depicted in \cref{fig:2/3-with-a-five} are $1/6$-reducible.
\end{lem}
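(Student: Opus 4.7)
The plan is to mirror the strategy that already succeeded in \cref{lem:five-wheel,lem:seven-with-four-fives-one-eight,lem:five-with-four-fives-plus-alpha}: for each of the four configurations, exhibit a trial sequence of vertices and then verify, case by case along the sequence, that the residual graph is $0$-reducible. Since each configuration again has $|V(H)| \geq 6$, \cref{dfn:1/6-reducibility} applies, and we are free to reduce degrees bookkeepingly whenever an earlier trial vertex has been ruled out (meaning it has an external neighbor colored with the fifth color, which shaves one off its effective degree by \cref{lem:dominance}).

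For each configuration I would first identify a \emph{terminal} vertex $v^\star$ of degree $5$ whose five incident edges all lie inside $H$ (equivalently, a degree-$5$ vertex with no halfedges). Such a vertex can always safely receive the fifth color, and will serve as the last entry in the trial sequence, guaranteeing that the final fallback case of \cref{dfn:1/6-reducibility} never arises. The remaining entries of the trial sequence should be chosen in decreasing order of "fifth-color affinity," i.e., vertices with many halfedges (many external neighbors) so that coloring them fifth is informative, and whose removal leaves a small-max-degree residue. In the style of the previous lemmas, these are the high-degree vertices ($d_G = 7$ or $8$) when present, followed by convenient degree-$6$ vertices.

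Once the trial sequence $(v_1, \dots, v_{k-1}, v^\star)$ is fixed, the verification for each configuration splits into $k$ subcases. In subcase $i < k$, we delete $v_i$ from $H$ (giving it the fifth color) and, by hypothesis, can further reduce the effective degree of each $v_j$ with $j < i$ by one. We must exhibit an elimination order of the remaining vertices in which each vertex has effective degree $\leq 4$ at the moment of deletion; this is precisely \cref{dfn:0-reducibility}. In the terminal subcase, we color $v^\star$ fifth (it always can be) and again check $0$-reducibility of $H - v^\star$, with every $v_j$, $j < k$, benefitting from its guaranteed external fifth-colored neighbor.

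The main obstacle is not conceptual but combinatorial: the four configurations must each be dispatched, and for each, the correctness of the chosen trial sequence depends on a fairly delicate interaction between the local rotation $\Pi_H$ (which controls which halfedges a given vertex owns) and the induced adjacencies within $H$ (which control the residual-degree count during elimination). Mistakes almost certainly show up as a residual vertex of effective degree $5$ with nowhere to go. I expect the right remedy, consistent with the preceding lemmas, is to pick $v^\star$ so that it is adjacent inside $H$ to the other degree-$5$ vertices (thus their removal steps strip $v^\star$'s degree down naturally), and to place any degree-$8$ vertex first in the sequence so that its three internal edges yield the biggest relief. Once the trial sequences are written down and the four figures inspected, the casework reduces to the same kind of short Heesch-chart bookkeeping carried out in \cref{lem:five-wheel,lem:seven-with-four-fives-one-eight,lem:five-with-four-fives-plus-alpha}.
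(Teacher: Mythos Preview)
Your strategy is correct and is exactly the paper's approach: exhibit a short trial sequence and verify $0$-reducibility along it. The paper's proof is simply the one-line observation that a uniform length-$3$ trial sequence works for all four configurations---the degree-$7$ vertex first, then a degree-$6$ vertex adjacent to it, then the degree-$5$ vertex adjacent to both---so your heuristics (high-degree first, degree-$5$ terminal) land on the intended sequence, and no degree-$8$ vertex is present in any of these four pictures.
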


\begin{proof}
    Every configuration has a trial sequence of length 3, starting from the degree 7 vertex, followed by a degree 6 vertex adjacent to the previous degree 7 vertex (if there are multiple, whichever is fine), and finally the degree 5 vertex adjacent to the previous two vertices.
\end{proof}

\begin{figure}[H]
  \centering
  \includegraphics[width=260pt]{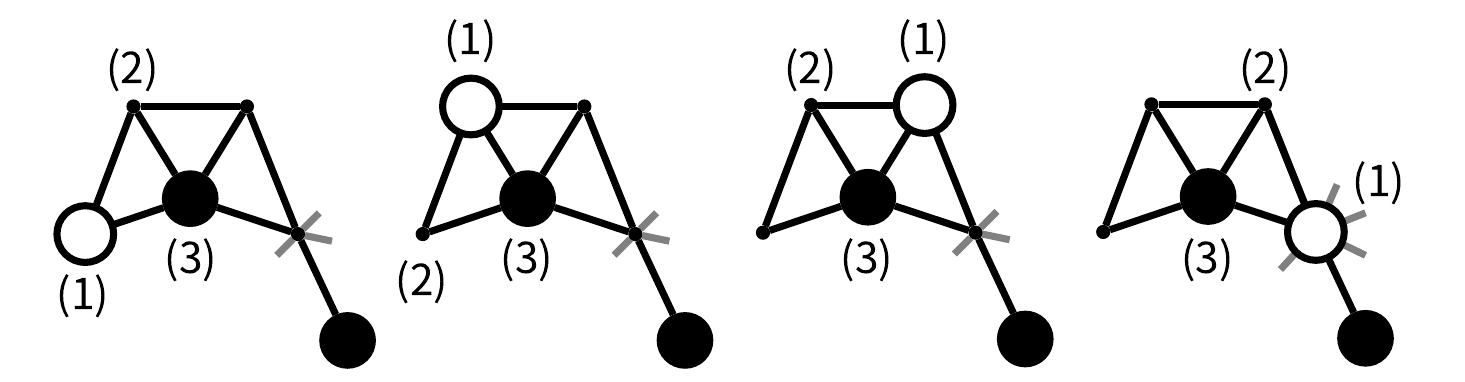}
  \caption{The four cases to consider in \cref{lem:2/3-with-a-five}.}
  \label{fig:2/3-with-a-five}
\end{figure}

\begin{lem} \label{lem:1-with-a-1/2}
    The two configurations depicted in \cref{fig:1-with-a-1/2} are $1/6$-reducible.    
\end{lem}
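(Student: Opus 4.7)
The plan is to follow exactly the recipe established in Lemmas~\ref{lem:five-wheel}--\ref{lem:2/3-with-a-five}: for each of the two configurations in \cref{fig:1-with-a-1/2}, exhibit a short trial sequence $(v_{i_1}, v_{i_2}, \ldots)$ and verify case-by-case that, at whichever position the fifth color is actually placed, the residual graph is $0$-reducible (via \cref{lem:four-degree} and \cref{lem:dominance}) after accounting for the effective degree drops. The naming convention and the pattern of earlier lemmas strongly suggest that each configuration contains a distinguished high-degree vertex (likely of degree $7$, in the "$1$" role) adjacent to a substructure of the "$1/2$" type, i.e.\ a vertex with several degree-$5$ or degree-$6$ neighbors in $H$.

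First I would identify, in each of the two configurations, the unique vertex of maximum degree and set it as $v_{i_1}$: placing the fifth color on a high-degree vertex eliminates the most halfedges and therefore imposes the weakest constraint on the trial to succeed there. For $v_{i_2}$ I would pick a neighbor of $v_{i_1}$ in $H$ of the next-highest degree (so that, when the trial fails at $v_{i_1}$, the degree drop on $v_{i_1}$ from the outside fifth-colored neighbor, combined with the removal of $v_{i_2}$, makes the remaining six-vertex configuration $0$-reducible). For $v_{i_3}$ (and possibly $v_{i_4}$) I would pick a low-degree vertex of $H$, preferably one adjacent to both $v_{i_1}$ and $v_{i_2}$, to guarantee that when the trial reaches it, the two earlier guaranteed external fifth-colored neighbors slot onto distinct vertices of the residual graph, keeping all effective degrees $\leq 4$.

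The verification then splits into a handful of branches. In each branch, I delete the chosen fifth-color vertex $v_{i_j}$, subtract one from the degree of each $v_{i_\ell}$ with $\ell < j$ (since the trial failed there, each such vertex has at least one external fifth-colored neighbor), and then try to order the remaining vertices so each has effective degree $\leq 4$ at the moment of its deletion. The last branch is the "fall-through" branch, in which every trial vertex is adjacent to an external fifth-colored vertex; here I must show that the full $H$, with every trial vertex's degree lowered by one, is $0$-reducible.

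The step I expect to be the main obstacle is precisely this fall-through branch, because it is the one that does not benefit from the removal of any vertex of $H$: the $0$-reducibility must come entirely from the cumulative external-neighbor reductions on the trial vertices. If the trial sequence is too short, or if the chosen trial vertices do not cover the "bottleneck" vertices that would otherwise obstruct a sequential peeling of $H$, this branch fails. The remedy is to tune the trial sequence so that, taken together, the reductions on $v_{i_1}, v_{i_2}, v_{i_3}$ break every obstruction simultaneously; for the small graphs ($|V(H)|=6$ or $7$) appearing in \cref{fig:1-with-a-1/2}, this should be easy to enforce by hand, but it is the one place where a careless choice of trial sequence will fail, and so deserves the most careful checking.
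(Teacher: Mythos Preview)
Your general methodology---exhibit a trial sequence and verify $0$-reducibility branch by branch---is exactly what the paper does. However, your concrete guesses about the configurations are off, and as a result the proposal is more a template than a proof.

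The configurations in \cref{fig:1-with-a-1/2} do \emph{not} contain a degree-$7$ vertex; the name ``$1$-with-a-$1/2$'' refers to the charge values in Rule~A (a degree-$5$ vertex sending charge $1$ adjacent to one sending at least $1/2$), not to a vertex of degree $7$. The maximum degree in $H$ is $6$. Consequently your heuristic ``pick the unique vertex of maximum degree, then the next-highest, then a low-degree common neighbor'' both misidentifies the first trial vertex and overestimates the length of the sequence needed.

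The paper's actual proof is a single sentence: each configuration admits a trial sequence of length \emph{two}, a degree-$6$ vertex followed by an adjacent degree-$5$ vertex. With only two trial vertices there are just three branches to check (fifth color on the $6$-vertex; fifth color on the $5$-vertex after the $6$-vertex has an external fifth-colored neighbor; fall-through with both trial vertices reduced), and each is an immediate peeling argument. Your anticipated ``main obstacle'' in the fall-through branch evaporates once the correct degrees are in hand: with the degree-$6$ and degree-$5$ trial vertices each dropped by one, everything in $H$ can be stripped at effective degree $\le 4$. So the plan would eventually succeed, but only after you correct the degree assumptions and discover that a length-$2$ sequence already suffices.
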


\begin{proof}
    Both of the configurations have trial sequences of length two, consisting of a degree 6 vertex followed by a degree 5 vertex.
\end{proof}

\begin{figure}[H]
  \centering
  \includegraphics[width=160pt]{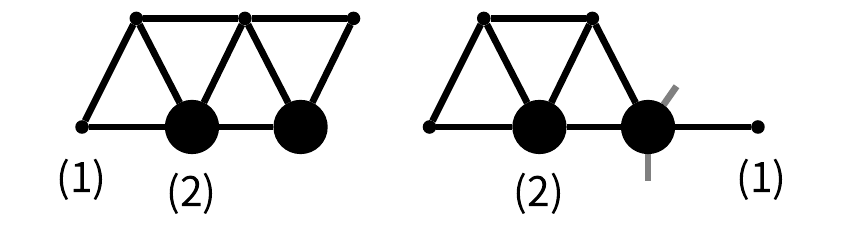}
  \caption{The two cases to consider in \cref{lem:1-with-a-1/2}.}
  \label{fig:1-with-a-1/2}
\end{figure}

\subsection{Virtually $1/6$-reducible configurations}

We finally introduce a class of configurations that are technically not $1/6$-reducible, but have a coloring scheme that is almost identical to the strategy we use in $1/6$-reducible configurations.
We say that they are \emph{virtually $1/6$-reducible} and treat them as reducible configurations. 
We explain their definition along with the proof of the following lemma.

\begin{lem} \label{lem:large-d-with-d-3-fives}
    Let $d \geq 8$ be an integer and let $H$ be a graph that consists of $d-2$ vertices $v$ and $v_1,\cdots,v_{d-3}$ such that $v$ is adjacent to all of $v_1,\cdots,v_{d-3}$.
    Assume $d_G(v) = d$ and $d_G(v_i) = 5$ for all $i = 1,\cdots, d-3$.
    Then, there is a way to color the vertices in $H$ with at most one vertex assigned the fifth color, given a precoloring of $G - H$, where $\mathcal{H} = (H, d_G, \Pi_H)$ appears in $G$.
\end{lem}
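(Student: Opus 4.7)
The plan splits into two cases based on whether any outer neighbor of $v$ is precolored with $5$. The argument deviates from strict $1/6$-reducibility in the second case: since $v$ has degree $d\ge 8$, it is far too large to be handled by $0$-reducibility, so I assign $v$'s color \emph{explicitly} rather than through a $0$-reduction step, and this is what makes the configuration \emph{virtually} (not literally) $1/6$-reducible.

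\textit{Case 1.} No outer neighbor of $v$ is colored~$5$. I assign color~$5$ to $v$. Then $H-v$ is the path $v_1v_2\cdots v_{d-3}$, and in $G-v$ each $v_i$ has degree~$4$ (since $d_G(v_i)=5$ and one edge of $v_i$ went to $v$). Iterating Lemma~\ref{lem:four-degree}, $H-v$ is $0$-reducible and therefore admits a proper $4$-coloring with palette $\{1,2,3,4\}$; exactly one vertex of $H$ (namely $v$) uses color~$5$.

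\textit{Case 2.} At least one outer neighbor of $v$ is colored~$5$. Since at least one of $v$'s three outer neighbors uses color~$5$, at most two of them use colors in $\{1,2,3,4\}$, so there are at least two choices for $c\in\{1,2,3,4\}$ not used by any outer neighbor of $v$. I pick such a $c$ and assign it to $v$. It then remains to color the path $v_1\cdots v_{d-3}$ using palette $\{1,2,3,4,5\}\setminus\{c\}$ with color~$5$ appearing at most once. I scan the $v_i$'s: if some $v_i$ has no outer neighbor colored~$5$, I color it with~$5$ and $3$-color the remaining (at most two) sub-paths with palette $\{1,2,3,4\}\setminus\{c\}$; otherwise every $v_i$ has a color-$5$ outer neighbor and I simply $3$-color the entire path with $\{1,2,3,4\}\setminus\{c\}$, placing no $5$ inside $H$. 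The $3$-coloring is feasible because each interior $v_i$ has only two outer neighbors, hence a list of size at least~$1$ in the three-color palette, while an endpoint $v_i$ could have empty list only if its three outer neighbors use all of $\{1,2,3,4\}\setminus\{c\}$, which would preclude $v_i$ having a color-$5$ outer neighbor and thus contradict the assumption of the second subcase.

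\textit{Main obstacle.} The delicate step is showing that the path list-coloring actually succeeds when several vertices have singleton lists: a singleton list at $v_1$ may cascade constraints along the path and clash with a singleton list at $v_{d-3}$. I expect to resolve this using the two-fold freedom in choosing $c$ (selecting one of the legal values so that the two endpoints are not simultaneously forced into a conflict), combined with the Kempe-chain argument for degree-four vertices inherited from Lemma~\ref{lem:four-degree} to handle any remaining tight local configurations along the path.
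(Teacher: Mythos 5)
Your Case 1 matches the paper's first step exactly (color $v$ with the fifth color when none of its three outer neighbors uses it, then peel the degree-4 link vertices). The gap is in Case 2, and you have correctly located it yourself: the claim that the link can be list-colored from $\{1,2,3,4,5\}\setminus\{c\}$ is not established. Once you fix $v$'s color $c$ in advance and assume every $v_i$ has a fifth-colored outer neighbor, each path endpoint is left with a list of size as small as $1$ in a $3$-color palette, and a forced color at one endpoint can propagate along the path and collide with the forced color at the other endpoint (one can write down explicit outer precolorings where greedy propagation fails). The ``two-fold freedom in choosing $c$'' is not obviously enough to break all such conflicts simultaneously at both endpoints, and the Kempe-chain repair for a stuck degree-5 vertex (four non-fifth-colored neighbors realizing all of $\{1,2,3,4\}$, two crossing pairs in the local rotation) is exactly the technical content that needs to be written out; as it stands your ``Main obstacle'' paragraph is a statement of the problem, not a proof. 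A second, independent oversight: $H-v$ need not be a single path. The $d-3$ link vertices can form up to three arcs of the cycle around $v$, separated by the three outer neighbors, and a singleton arc has \emph{four} outer neighbors, so its list can be empty even before any cascade starts. The paper's proof handles this by an explicit case split on the number of components.

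The paper avoids your obstacle entirely by never committing $v$ to a color in advance. Instead, it fixes one outer neighbor $u_i$ of $v$ that is colored $5$, discounts $u_i$ from all degree counts, observes that every arc of $H-v$ adjacent to $u_i$ can then be peeled starting from its end next to $u_i$ (each vertex reaching degree $4$ in turn), and designates a single backup vertex $v'$ in the one remaining arc: either $v'$ gets the fifth color or it too has a fifth-colored outer neighbor and is effectively degree $4$. Everything, including $v$ itself (whose degree drops to $2$ after the link is peeled), is then absorbed into $0$-reducibility, where the degree-$4$ Kempe argument is already packaged and the coloring order is the reverse of the deletion order rather than a left-to-right scan of the path. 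To repair your proof you would need to either adopt that deletion-order viewpoint or carry out the endpoint Kempe argument and the disconnected-link case analysis explicitly.
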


\begin{proof}
    Let us name the neighbors of $v$ as $u_0,u_1,\cdots,u_{d-1}$ (ordered counterclockwise), indexed with $i \in \mathbb{Z}/d\mathbb{Z}$.
    Since the edge $u_iu_{i+1}$ exists for every $i$ in $G$, there are at most three connected components in $H-v$.
    The connected components can be expressed as a range over the cycle $u_0u_1\cdots u_{d-1}$, separated by $u_i, u_j, u_k \notin V(H)$.
    
    We now provide the coloring scheme.
    Assume that $G - V(H)$ is already colored.
    We first try to color $v$ with the fifth color, if none of $u_i, u_j, u_k$ are colored with the fifth color.
    After this, $d_{G-v}(v_i) = 4$ for all $v_i \in V(H)$, so $(H-v, d_{G-v})$ is 0-reducible.
    
    If one of $u_i, u_j, u_k$ is colored with the fifth-color, we use a backup plan. 
    Fix $u_i$ as the vertex with the fifth color without loss of generality.
    We consider the following three cases (See \cref{fig:large-d-with-d-3-fives} for a visualization.).

    \begin{itemize}
        \item When $H-v$ is connected: Let us select some $v' \in V(H-v)$. 
        The configuration $(H-{v'}, d_{G - v' - u_i})$ is $0$-reducible, since the removal of $v'$ will allow all vertices of degree 5 to be deleted sequentially. Note that if we cannot color $v'$ with the fifth-color, then $v'$ also becomes degree four, and hence we can delete all the vertices of degree five. 
        \item When $H-v$ has two connected components: At least one of them must be adjacent to $u_i$, which we call $H_1$. 
        Let the other connected component be called $H'$.
        Then, by selecting some $v' \in V(H')$, the configuration $(H-v',d_{G-v'-u_i})$ is $0$-reducible, since the vertices in $H_1$ will be deleted due to $u_i$ being removed, and the removal of $v'$ will allow all vertices in $H'$ to be deleted sequentially. Again, if we cannot select such a vertex to give the fifth-color, we can also delete all the vertices in $H'$. 
        \item When $H-v$ has three connected components: At least two of them must be adjacent to $u_i$, which we call $H_1, H_2$.
        Let the third connected component be called $H'$.
        Then, by selecting some $v' \in V(H')$, the configuration $(H-v',d_{G-v'-u_i})$ is $0$-reducible, since the vertices in $H_1$ and $H_2$ will be deleted due to $u_i$ being removed, and the removal of $v'$ will allow all vertices in $H'$ to be deleted sequentially.  Again, if we cannot select such a vertex to give the fifth-color, we can also delete all the vertices in $H'$. 
    \end{itemize}

    Therefore, the vertices of $H$ can be 5-colored with only one vertex assigned to the fifth color.
\end{proof}

\begin{figure}[H]
  \centering
  \includegraphics[width=300pt]{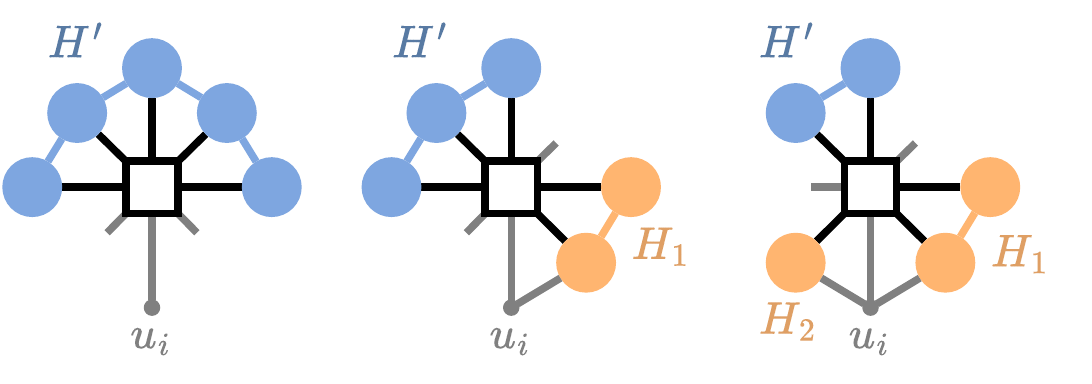}
  \caption{The examples for the three cases for when $d = 8$.}
  \label{fig:large-d-with-d-3-fives}
\end{figure}

\section{Discharging}\label{sec:discharging}

\subsection{Discharging rule}
We now apply the \emph{discharging method} to prove that one reducible configuration appears in any given planar (triangulation) graph $G$.

We initially assign a value called the \emph{charge} to every vertex $v \in G$, equal to $6 - d_G(v)$. 
Then, for every vertex $v$ and its neighbor $u$, we send $c(v, u)$ charge from $v$ to $u$.
The value of $c(v, u)$ is determined by the \emph{discharging rule} defined as follows and depicted in \cref{fig:rules}.
(We denote $N_d(v)$ as the set of neighbors of $v$ of degree exactly $d$.)

\begin{itemize}
    \item Rule A: When $d_G(v) = 5$ and $d_G(u) \ge 7$,
    \begin{numcases}{c(v, u) := }
        1/3 & $(d_G(u) = 7)$ \tag{A-1}
        \\
        1/2 & $(d_G(u) = 8)$ \tag{A-2}
        \\
        \max(1/3, r(v)/|N_{9+}(v)|) & $(d_G(u) \ge 9)$ \tag{A-3}
    \end{numcases}
    Here, $r(v)$ is the remaining charge of $v$ after distributing charges to neighbors of degree $8$ or less, which equates to $1 - |N_7(v)|\times1/3 - |N_8(v)|\times1/2$ ($N_{9+}(v)$ is the set of neighbors of $v$ of degree 9 or above).
    \item Rule B: When $d_G(v) = 7$, and $|N_5(v)| = 4$.
    \begin{itemize}
        \item If the neighbors of $v$ in $N_5(v)$ do not form a path: $c(v,u) = 1/3$ if $u \in N_{9+}(v)$ and $u$ is adjacent to two vertices in $N_5(v)$.
        \item If the neighbors of $v$ in $N_5(v)$ do form a path: $c(v,u) = 1/6$ if $u$ is the endpoint of the path.
    \end{itemize}
    \item Otherwise, $c(v,u) = 0$.
\end{itemize}


\begin{figure}[H]
    \centering
    \includegraphics[width=150pt]{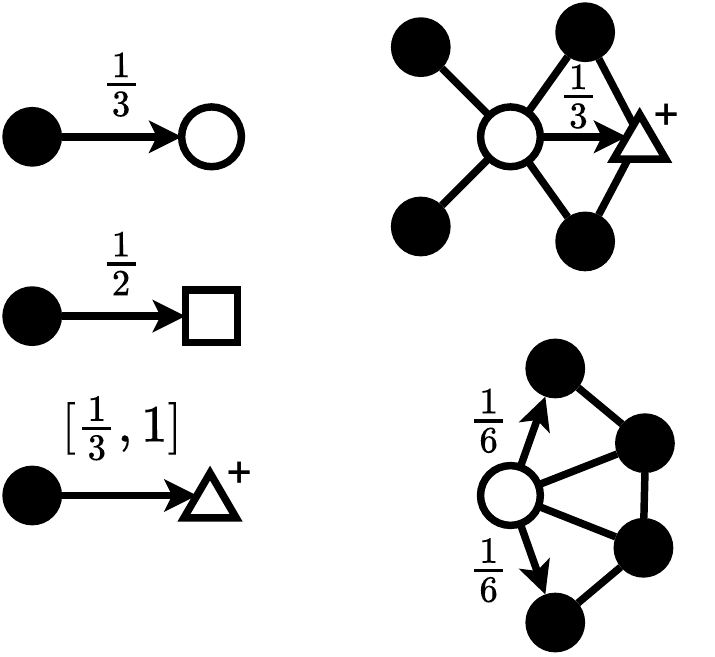}
    \caption{The subgraphs where positive charge is being sent. The three cases on the left correspond to Rule A, and the two cases on the right correspond to Rule B.}
    \label{fig:rules}
\end{figure}

The final charge of each vertex $v$ is equal to the following:
\begin{equation*}
    c(v) = (6-d_G(v)) - \sum_{u \in N(v)} (c(v, u) - c(u, v))
\end{equation*}

The summation of all the charges of the vertices $\sum_v c(v) = 6 |V| - \sum_v d_G(v) = 6 |V| - 2 |E|$, which equals $12$ by Euler's formula. 
(Note that the charge movements $- \sum_{u \in N(v)} (c(v, u) - c(u, v))$ cancel out when they are summed up over all the vertices.)
Therefore, the following lemma holds.

\begin{lem}\label{lem:exists-positive-charge}
    For any triangulation $G$, there is at least one vertex with $c(v) > 0$.
\end{lem}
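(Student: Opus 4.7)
The plan is to observe that the discharging rules only move charge between pairs of vertices, so the sum $\sum_v c(v)$ is invariant under the discharging procedure and equals the sum of initial charges $\sum_v (6 - d_G(v)) = 6|V| - 2|E|$. Then I would use Euler's formula to show this quantity is strictly positive, which by pigeonhole forces some vertex to carry positive charge.

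First, I would justify the cancellation: for each ordered neighbor pair $(v,u)$, the term $c(v,u)$ appears in the charge update of $v$ (as an outgoing charge) and in the update of $u$ (as an incoming charge), so when we sum $c(v)$ over all $v$, every $c(v,u)$ cancels against its counterpart in the neighbor's sum. This gives $\sum_v c(v) = \sum_v (6 - d_G(v)) = 6|V| - 2|E|$, since $\sum_v d_G(v) = 2|E|$ by the handshaking lemma.

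Next, I would invoke Euler's formula. Since $G$ is a simple triangulation (every face is a triangle, each edge bounds two faces), we have $2|E| = 3|F|$, and combined with $|V| - |E| + |F| = 2$ this yields $|E| = 3|V| - 6$. Substituting gives $\sum_v c(v) = 6|V| - 2(3|V|-6) = 12 > 0$.

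Finally, since the total charge is $12$, at least one vertex must satisfy $c(v) > 0$, completing the proof. There is essentially no obstacle here; the only minor point to be careful about is to state the cancellation of charge movements cleanly (which the excerpt already flags parenthetically after the definition of $c(v)$), so the entire argument is a short calculation.
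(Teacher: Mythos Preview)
Your proposal is correct and matches the paper's own argument essentially verbatim: the paper also observes that the charge movements cancel when summed over all vertices, computes $\sum_v c(v) = 6|V| - 2|E| = 12$ via Euler's formula, and concludes that some vertex has positive charge. Your write-up merely spells out the Euler-formula step ($2|E|=3|F|$, hence $|E|=3|V|-6$) in slightly more detail than the paper does.
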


\subsection{Separating Cycle}
In the following section, in order to prove the existence of a particular configuration $\mathcal{H} = (H, d_G, \Pi)$ in $G$, we typically find a set of $h = |V(H)|$ vertices $\{v_1,\cdots,v_h\}$, and check that the degree matches $d_G$, the local rotation matches $\Pi$, and the edges in between $v_1,\cdots,v_h$ comply with the structure of $H$.

We also need to check that the vertices $v_1,\cdots,v_h$ are distinct, that is, no two vertices $v_i$ and $v_j$ are actually the same.
This is trivial when $H$ is of diameter 2, since an identified vertex with a path of order one or two between them implies that $G$ contains a loop or a multiple edge.
On the other hand, when $v_i$ and $v_j$ that are of distance $3$ in $H$ exist, we cannot say for sure that $v_i$ and $v_j$ are different, since separating 3-cycles are allowed in $G$.

Still, we can show that some degree constraints hold if such a 3-cycle exists. 

\begin{lem}\label{lem:three-cycle-18-plus}
    Let $u, v, w$ be three vertices in $G$ such that $uv, vw, wu \in E(G)$, but $uvw$ is not a facial cycle.
    Then, $d_G(u) + d_G(v) + d_G(w) \geq 18$, or a vertex of degree 3 exists in $G$.
\end{lem}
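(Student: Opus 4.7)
The plan is to exploit the fact that, since $G$ is a simple 3-connected triangulation (so it has a unique embedding on the sphere) while $uvw$ is not facial, the 3-cycle $uvw$ separates the embedding into two sides, each containing at least one vertex of $G$. Write $V_{\mathrm{in}}$ and $V_{\mathrm{out}}$ for the sets of vertices strictly on the two sides, with $n_{\mathrm{in}} = |V_{\mathrm{in}}|$, $n_{\mathrm{out}} = |V_{\mathrm{out}}|$. For $x \in \{u,v,w\}$ write $d_{\mathrm{in}}(x)$ (resp.\ $d_{\mathrm{out}}(x)$) for the number of neighbors of $x$ in $V_{\mathrm{in}}$ (resp.\ $V_{\mathrm{out}}$). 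Then
\[
d_G(u) + d_G(v) + d_G(w) \;=\; 6 \;+\; \sum_{x \in \{u,v,w\}} d_{\mathrm{in}}(x) \;+\; \sum_{x \in \{u,v,w\}} d_{\mathrm{out}}(x),
\]
since each edge of the triangle $uvw$ is counted twice. Hence it suffices to show that each of the two trailing sums is at least $6$, unless a degree-3 vertex is produced along the way.

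The main estimate applies when $n_{\mathrm{in}} \geq 3$. The induced subgraph $G_{\mathrm{in}}$ on $V_{\mathrm{in}} \cup \{u,v,w\}$ inherits an embedding in which every face is triangular (the faces of $G$ inside $uvw$, together with the new outer face $uvw$), so $G_{\mathrm{in}}$ is itself a planar triangulation with $|E(G_{\mathrm{in}})| = 3(n_{\mathrm{in}} + 3) - 6 = 3 n_{\mathrm{in}} + 3$. The subgraph induced on $V_{\mathrm{in}}$ alone is planar and contributes at most $3 n_{\mathrm{in}} - 6$ edges, while the cycle $uvw$ contributes $3$ more, so the number of edges joining $V_{\mathrm{in}}$ to $\{u,v,w\}$ is at least $(3 n_{\mathrm{in}} + 3) - (3 n_{\mathrm{in}} - 6) - 3 = 6$, which is exactly $\sum_{x} d_{\mathrm{in}}(x)$. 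The symmetric argument handles the outside.

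For the boundary cases $n_{\mathrm{in}} \in \{1, 2\}$ (and symmetrically $n_{\mathrm{out}}$), direct inspection produces a degree-3 vertex of $G$. When $n_{\mathrm{in}} = 1$, the unique interior vertex has $u, v, w$ as its only $G$-neighbors. When $n_{\mathrm{in}} = 2$, $G_{\mathrm{in}}$ is a triangulation on $5$ vertices (so $9$ edges, i.e., $K_5$ minus one edge); the missing edge cannot be $x_1 x_2$, because two interior vertices of $uvw$ cannot both be adjacent to all of $u,v,w$ in a planar embedding, so the missing edge has the form $x_i u$, $x_i v$, or $x_i w$, leaving the corresponding $x_i$ with $G$-degree exactly $3$. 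I expect the main obstacle to be these boundary cases, where the Euler-formula count does not by itself deliver the bound; there one must verify that the low-$G_{\mathrm{in}}$-degree vertex really has $G$-degree $3$, but this is automatic because every vertex of $V_{\mathrm{in}}$ has all its $G$-neighbors inside $G_{\mathrm{in}}$.
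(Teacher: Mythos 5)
Your proposal is correct and follows essentially the same route as the paper: split the graph along the separating triangle into its two sides, show each side receives at least $6$ edges from $\{u,v,w\}$, and observe that the exceptional small sides ($1$ or $2$ interior vertices) force a degree-$3$ vertex. The only difference is presentational: where the paper asserts the edge count by inspecting which small components keep $G$ simple, you derive the bound of $6$ crossing edges explicitly from Euler's formula, which is a slightly more rigorous rendering of the same argument.
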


\begin{proof}
    Let us fix a connected component $G'$ in $G - \{u,v,w\}$.
    Let us denote the number of edges connecting $u$ and a vertex in $G'$ as $n_u$, and let us define $ n_v$ and $ n_w$ likewise.

    We now use the fact that $G$ is a triangulation and $G$ is simple.
    Suppose $n_u + n_v + n_w < 6$.
    The only cases where $G$ is simple are the following: $|V(G')|=1$ where $n_u+n_v+n_w = 3$, or $|V(G')|=2$ where $n_u + n_v + n_w = 5$.
    In either case, $G'$ contains a vertex of degree $3$ in $G$.

    We can prove this for both sides of the connected component of $G - \{u,v,w\}$.
    This sets a lower bound of the number of edges coming out of the three vertices, and the sum of the degrees of $u$, $v$, and $w$ is at least $6 + 6 + 6 = 18$.
\end{proof}

Looking at all reducible configurations $\mathcal{H} = (H, d_G, \Pi)$ of diameter 3 or more which we use in our paper (\cref{lem:seven-with-four-fives-one-eight,lem:five-with-four-fives-plus-alpha,lem:2/3-with-a-five,lem:1-with-a-1/2}), we can see that for all vertex pairs $v_i, v_j$ of distance exactly $3$ in $H$, there exists a path $v_iu_1u_2v_j$ such that $\min\{d_G(v_i),d_G(v_j)\} + d_G(u_1) + d_G(u_2) < 18$, and the edges $v_iu_1$ and $u_2u_1$ are not consecutive in $\pi(u_1)$, and the edges $u_1u_2$ and $v_ju_2$ are not consecutive in $\pi(u_2)$.
This, along with \cref{lem:three-cycle-18-plus}, derives the following.

\begin{lem}\label{lem:4.2}
    Let $G$ be a triangulation of minimum degree $4$ or more.
Suppose that a reducible configuration $\mathcal{H} = (H, d_G, \Pi_H)$ defined in one of \cref{lem:five-wheel,lem:seven-with-four-fives-one-eight,lem:five-with-four-fives-plus-alpha,lem:2/3-with-a-five,lem:1-with-a-1/2,lem:large-d-with-d-3-fives} exists in $G$ such that there is a graph homomorphism (not necessarily a graph isomorphism) $\phi$ from $H$ to $(V=\{v_1,\cdots,v_k\} \subset V(G), E \subset E(G))$ (satisfying the degree condition by $d_G$).
    Then, the homomorphism $\phi$ is actually an isomorphism, i.e. $v_1, \cdots, v_k$ must be pairwise distinct, and $G[V]$ induces a graph that contains $H$ as a subgraph.
\end{lem}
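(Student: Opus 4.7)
I would argue by contradiction: assume $\phi$ is not injective and pick a pair $v_i\neq v_j$ in $V(H)$ with $\phi(v_i)=\phi(v_j)$ that minimizes $d_H(v_i,v_j)$. Once this pair is fixed, I split into three ranges for $d_H(v_i,v_j)$ and derive a contradiction in each. The first two cases are handled exactly as the text indicates in its informal remark, and the third is the real content of the lemma.

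\textbf{Short distances.} If $d_H(v_i,v_j)=1$, then the edge $v_iv_j\in E(H)$ is sent by $\phi$ to a loop at $\phi(v_i)$, contradicting that $G$ is loopless. If $d_H(v_i,v_j)=2$, choose a common $H$-neighbor $w$. The elements $wv_i$ and $wv_j$ are distinct entries of $\pi_w^H$, and since the configuration $\mathcal{H}$ is realised in $G$ with local rotations preserved, they correspond to two distinct edges of $G$ incident to $\phi(w)$ whose other endpoints both equal $\phi(v_i)=\phi(v_j)$; this is a multi-edge and contradicts simplicity of $G$.

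\textbf{Distance $3$.} This is where the observation stated immediately before the lemma (checked by direct inspection of each configuration) is used. Fix a path $v_iu_1u_2v_j$ in $H$ satisfying (i) $\min\{d_G(v_i),d_G(v_j)\}+d_G(u_1)+d_G(u_2)<18$, and (ii) $v_iu_1$ and $u_2u_1$ are non-consecutive in $\pi_{u_1}^H$, and $u_1u_2$ and $v_ju_2$ are non-consecutive in $\pi_{u_2}^H$. Under the identification $\phi(v_i)=\phi(v_j)$ this path closes into a triangle $T$ on $\phi(v_i),\phi(u_1),\phi(u_2)$ in $G$. Condition (ii), combined with the rotation-preserving embedding, says that at $\phi(u_1)$ (and at $\phi(u_2)$) the two edges of $T$ incident to that vertex are not consecutive in the local rotation of $G$, so the triangle $T$ is not a facial cycle. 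Apply \cref{lem:three-cycle-18-plus} to $T$: since $G$ has minimum degree at least $4$, the alternative conclusion about a degree-$3$ vertex is excluded, so $d_G(\phi(v_i))+d_G(\phi(u_1))+d_G(\phi(u_2))\geq 18$. Because $\phi$ preserves prescribed degrees and $\phi(v_i)=\phi(v_j)$ forces $d_G(v_i)=d_G(v_j)$, this contradicts~(i).

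\textbf{Large distances and main obstacle.} A routine inspection of the six configurations of \cref{lem:five-wheel,lem:seven-with-four-fives-one-eight,lem:five-with-four-fives-plus-alpha,lem:2/3-with-a-five,lem:1-with-a-1/2,lem:large-d-with-d-3-fives} shows that each has diameter at most $3$ (the underlying graphs are essentially a wheel, a star with a few chords, or $v$ together with its $H$-neighbors), so the case $d_H(v_i,v_j)\geq 4$ does not arise. The only step that requires real work is the observation cited in the distance-$3$ case: for each of the two distance-$3$-or-more configurations in our list, one must enumerate the pairs $\{v_i,v_j\}$ at distance exactly $3$ in $H$ and exhibit a connecting path $v_iu_1u_2v_j$ satisfying (i) and (ii). Since the authors state this has been verified, I would treat it as a checked combinatorial fact and present only the logical chain above; the remainder of the proof is a clean three-case analysis that reduces everything to \cref{lem:three-cycle-18-plus} and the simplicity/looplessness of $G$.
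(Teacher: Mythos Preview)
Your proposal is correct and follows essentially the same approach as the paper. The paper's proof is really just the paragraph immediately preceding the lemma: it notes that diameter-$\le 2$ identifications collapse to loops or multi-edges, and that for every distance-$3$ pair in the listed configurations one can exhibit a path $v_iu_1u_2v_j$ with degree-sum below $18$ and non-consecutive rotations, so \cref{lem:three-cycle-18-plus} rules out the identification; you have simply fleshed this sketch out into an explicit minimum-distance case analysis, and your use of the rotation data to force a genuine multi-edge at distance $2$ and a non-facial triangle at distance $3$ is exactly what the paper intends.
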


\subsection{Unavoidability}

Hereafter, we assume that $G$ is a \emph{minimal counterexample} of \cref{main}, i.e. a triangulation with the smallest order that contradicts \cref{main}.
Let a reducible configuration $\mathcal{H}$ be contained in $G$.
Then, the graph $G - V(H)$ (where $H$ is the underlying graph of $\mathcal{H}$) must be colorable with respect to \cref{main} by the minimality of $G$.
By reducibility of $\mathcal{H}$, the vertices in $H$ can also be colored; hence, a contradiction.

Therefore, there must be no reducible configuration in $G$.

Now, we will show the resulting charge $c(v) \le 0$ for all $v \in V(G)$. (This contradicts \cref{lem:exists-positive-charge}, thus proving \cref{main} by contradiction.)
In the following proofs, let $v_0, v_1, \cdots, v_{d-1}$ ($d := d_G(v)$) be the neighbors of $v$, indexed with $\mathbb{Z}/d\mathbb{Z}$, such that $v_0v_1\cdots v_{d-1}$ forms a cycle ordered counterclockwise in $G$.

Hereafter, we divide the proof into five cases, $d_G(v) = 5,6,7,8,$ and $9+$. 
In each case, we will show that $v$ ends up with a non-positive charge when there is no reducible configuration in $G$.
Note that, by \cref{lem:4.2}, it suffices to find a vertex set having the same structure and degree count to ensure that it exists in $G$.

\subsubsection{$d_G(v) = 5$}
The vertex $v$ initially has charge $1$.
First, by Rule A, $v$ sends some charge to the neighboring vertices of degree $7$ or more.
Combining \cref{lem:dominance,lem:five-wheel}, at least one of the following is satisfied.
\begin{itemize}
    \item One neighbor $v_i$ has $d_G(v_i) \ge 9$.
    $v$ sends charge $1$ to $v_i$.
    \item Two neighbors $v_i, v_j$ has $d_G(v_i) \ge 8, d_G(v_j) \ge 8$.
    $v$ sends charge $1/2$ or more to each of $v_i, v_j$.
    \item Three neighbors $v_i, v_j, v_k$ has $d_G(v_i) \ge 7, d_G(v_j) \ge 7, d_G(v_k) \ge 7$.
    $v$ sends charge $1/3$ or more to each of $v_i, v_j, v_k$.
\end{itemize}
In either of these cases, $v$ sends a total charge $1$ or more to the neighboring vertices, resulting in a non-positive charge.

Next, by Rule B, $v$ may receive $1/6$ charge from some neighbors of degree $7$.
We now prove the following.

\begin{lem} \label{lem:ring-of-seven-and-four-fives}
    Let $v$ receive $1/6$ charge by Rule B from a vertex $v_i$ of degree 7.
    Then, $v$ has four neighbors (including $v_i$) of degree at least 7, in which at least two of them are of degree at least 8.
\end{lem}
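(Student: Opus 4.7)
The plan is to combine the structural constraints imposed by Rule B with the reducibility of the $5$-wheel (\cref{lem:five-wheel}) via dominance (\cref{lem:dominance}), and to invoke the remaining reducible configurations of Section~3 to close out any borderline subcases. First, the setup: Rule B with charge $1/6$ forces $v_i$ to have degree $7$ with four degree-$5$ neighbors forming a path, and $v$ to be one endpoint of that path. Label the seven cyclic neighbors of $v_i$ as $u_1 = v, u_2, u_3, u_4, x_1, x_2, x_3$, where $u_1 u_2 u_3 u_4$ is the degree-$5$ path and $x_1, x_2, x_3$ are the three non-path neighbors (each of degree $\ge 6$). By the triangulation structure, the two common neighbors of $v$ and $v_i$ are exactly $u_2$ and $x_3$, so the five neighbors of $v$, listed in cyclic order around $v$, can be written as $u_2, v_i, x_3, a, b$ for two uniquely determined vertices $a, b$.

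The lemma reduces to proving $d_G(x_3), d_G(a), d_G(b) \ge 7$ with at least two of them $\ge 8$. I would consider the $5$-wheel at $v$ whose outer cycle $(u_2, v_i, x_3, a, b)$ has degree sequence $(5, 7, d_{x_3}, d_a, d_b)$, and analyze when \cref{lem:dominance,lem:five-wheel} force a $1/6$-reducible subconfiguration. Enumerating the cyclic placements of $v_i$ on the $7$-slot or $8$-slot of the template $\{6,6,6,7,8\}$ yields obstructions such as ``$d_{x_3} \le 8$ with $d_a, d_b \le 6$'' (from the adjacent template $(8,7,6,6,6)$) and ``$d_{x_3} \le 6$ with one of $d_a, d_b$ at most $8$ and the other at most $6$'' (from the non-adjacent template $(8,6,7,6,6)$). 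Conjunctively negating all such conditions already forces $d_{x_3}, d_a, d_b \ge 7$ away from a handful of borderline subcases.

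Two types of borderline subcases slip through \cref{lem:five-wheel} alone. In the first, some of $d_{x_3}, d_a, d_b$ equals exactly $6$ while another is very large (say $\ge 9$); this gives rise to a $(5,6,7)$-triangle at $v$-$v_i$-$x_3$ or at $v$ with $a$ or $b$ and its common neighbor, and up to the local rotation system this matches one of the four configurations of \cref{lem:2/3-with-a-five} or the two of \cref{lem:1-with-a-1/2}, each yielding a reducible subconfiguration. In the second, most of $d_{x_3}, d_a, d_b$ equal exactly $7$ (so that at most one is $\ge 8$); here I would attempt to match \cref{lem:seven-with-four-fives-one-eight} using $v_i$ as the degree-$7$ vertex of that configuration together with its degree-$5$ path, or alternatively apply \cref{lem:five-wheel} at $u_4$ (the other endpoint of $v_i$'s degree-$5$ path) whose outer cycle has analogous structure and whose degree sequence is forced into a dominating arrangement.

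The hard part will be this second borderline type, in particular when $d_{x_3} = d_a = d_b = 7$ and no high-degree second neighbor of $v_i$ is available locally: one then has to look slightly further — at the degree-$5$ wheel around $u_4$, or at the wheels around $x_3, a, b$ themselves — to exhibit the necessary reducible configuration. The left-right symmetry of $v_i$'s degree-$5$ path (swapping the endpoints $u_1$ and $u_4$) should halve the case work, but the careful enumeration of cyclic rotations is what makes the argument nontrivial.
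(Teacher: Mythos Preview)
Your approach has a genuine gap: you never invoke \cref{lem:five-with-four-fives-plus-alpha}, and that is the one lemma the paper actually uses here. The paper's proof is three lines: take the degree-$7$ hub $v_i$ together with its degree-$5$ path $u_1u_2u_3u_4$ (where $u_1=v$), and adjoin one extra neighbour of $v$. The three configurations of \cref{lem:five-with-four-fives-plus-alpha} (with \cref{lem:dominance}) give reducibility whenever $d_G(v_{i+1})\le 7$, $d_G(v_{i+2})\le 6$, or $d_G(v_{i+3})\le 7$ respectively; negating all three yields exactly $d_G(v_{i+1}),d_G(v_{i+3})\ge 8$ and $d_G(v_{i+2})\ge 7$, which is the claim.

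The lemmas you do cite cannot close the case you yourself flag as hard, namely $d_G(x_3)=d_G(a)=d_G(b)=7$ (in the paper's notation $x_3=v_{i+1}$, $a=v_{i+2}$, $b=v_{i+3}$). For \cref{lem:five-wheel} the outer cycle of $v$'s wheel has degrees $(5,7,7,7,7)$, and no cyclic placement of $\{6,6,6,7,8\}$ dominates this: the template has only two entries $\ge 7$, while four positions need at least $7$. For \cref{lem:seven-with-four-fives-one-eight} the ``degree-$8$'' vertex must be adjacent to two of $v_i$'s degree-$5$ neighbours, but $x_3$ sits next to $u_1$ and $x_2$ on the link of $v_i$, hence is adjacent to only one of the $u_j$. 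Both \cref{lem:2/3-with-a-five} and \cref{lem:1-with-a-1/2} require a vertex of degree at most $6$ in the relevant triangle, and there is none when $x_3,a,b$ all have degree $7$. Your fallback of ``looking further'' at the wheel around $u_4$ or around $x_3,a,b$ does not help either, because nothing in the hypotheses bounds the degrees of their outer neighbours. The same obstruction arises in several other subcases (e.g.\ $d_G(x_3)=7$, $d_G(a)$ large, $d_G(b)=7$), so this is not a single exceptional point but a structural hole: without \cref{lem:five-with-four-fives-plus-alpha} the argument does not go through.
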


\begin{proof}
    By Rule B, the four vertices of degree 5 neighboring $v_i$ are aligned in a path. 
    Let this path be $u_1u_2u_3u_4$. 
    Additionally, $v$ is the endpoint of this path (i.e., $v=u_1$).
    Since $v_iu_1u_2 = v_ivu_2$ is a facial triangle, $u_2$ is identical to $v_{i-1}$. 
    (The positioning of each vertex is depicted in \cref{fig:charge-of-seven-with-four-fives}.)
    The remaining three neighbors of $v$ are $v_{i+1}$, $v_{i+2}$, and $v_{i+3}$.
    
    Now, if $d_G(v_{i+1}) \leq 7$, the vertices $v_i, u_1, u_2, u_3, u_4$ along with $v_{i+1}$ constitute a $1/6$-reducible configuration by \cref{lem:five-with-four-fives-plus-alpha,lem:dominance}.
    Similarly, if $d_G(v_{i+3}) \leq 7$ or $d_G(v_{i+2}) \leq 6$, the vertices $v_i, u_1, u_2, u_3, u_4$ along with $v_{i+3}$ or $v_{i+2}$ constitute a $1/6$-reducible configuration, respectively. 

    Therefore, the three inequalities $d_G(v_{i+1}) \geq 8$, $d_G(v_{i+3}) \geq 8$, and $d_G(v_{i+2}) \geq 7$ hold.
\end{proof}

With \cref{lem:ring-of-seven-and-four-fives}, we know the lower bound of the degree of the surrounding vertices of a vertex $u_1$ sending $1/6$ charge back to $v$, as shown in \cref{fig:charge-of-seven-with-four-fives}. (By symmetry, both the endpoints of the paths of order four that consist of degree 5 are given $1/6$ charge.)
From this argument, one can confirm that $v$ sends out a net minimum of charge one, even after the $1/6$ charge received from $v_i$, by giving out a total of $1/3 \times 4 = 4/3$ charge when Rule A is applied. Thus, $v$ ends up with a non-positive charge. 

\begin{figure}[H]
  \centering
  \includegraphics[width=350pt]{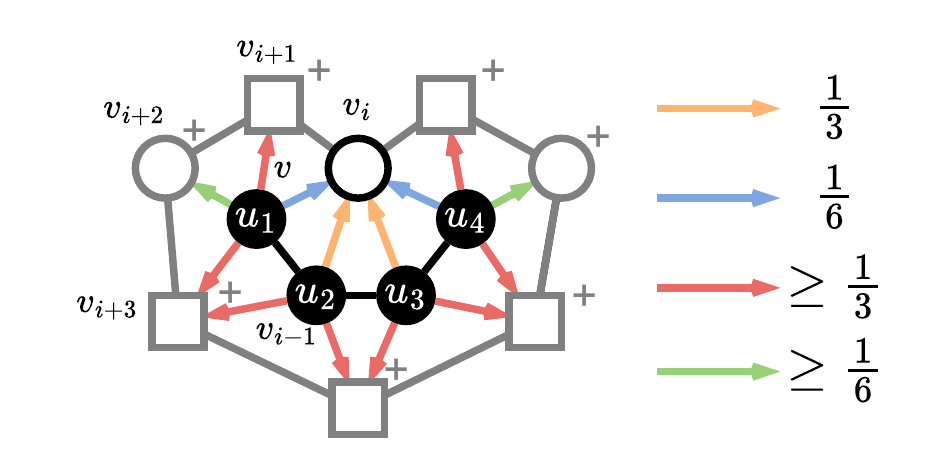}
  \caption{A vertex of degree 7 surrounded by four vertices of degree 5, aligned in a path. Here, the overall charge movement (value of $c(u,v)-c(v,u)$) of all edges from the degree 5 vertices is shown by color, where red sends at least $1/3$, orange sends $1/3$, green sends at least $1/3$, and blue sends $1/6$. 
  The vertex of degree 5 sends a minimum of $1/6 + 1/6 + 1/3 + 1/3 = 1$ charge, whereas the vertex of degree 7 receives $1/6 + 1/3 + 1/3 + 1/6 = 1$ charge.}
  \label{fig:charge-of-seven-with-four-fives}
\end{figure}

\subsubsection{$d_G(v) = 6$}
The vertex $v$ has an initial charge $0$, and no charge is sent to $v$. 
Thus $v$ ends up with charge 0. 

\subsubsection{$d_G(v) = 7$}
The vertex $v$ has an initial charge $-1$. Recall that $N_5(v) := \{v_i \mid d_G(v_i) = 5\}$.

When $|N_5(v)| \le 3$, the total charge that will be sent from them is at most $1/3 \times 3 = 1$, resulting in a non-positive charge.
On the other hand, by \cref{lem:seven-with-four-fives-one-eight,lem:dominance}, when $|N_5(v)| \ge 5$, $v$ along with the neighbors, will exhibit $1/6$-reducibility.

The remaining case is $|N_5(v)| = 4$, where the vertices in $N_5(v)$ send total charge $1/3 \times 4 = 4/3$.
Suppose a vertex $u = v_i$ such that $d_G(v_{i-1}) = d_G(v_{i+1}) = 5$ exists.
When $d_G(u) \le 8$, by \cref{lem:seven-with-four-fives-one-eight,lem:dominance}, the four vertices in $N_5(v)$ along with $v$ and $u$ consist of a $1/6$-reducible configuration.
When $d_G(u) \ge 9$, this vertex receives $1/3$ charge, and the resulting charge of $v$ is exactly $0$.

On the other hand, when no such $u$ exists, the induced graph $G[N_5(v)]$ must be a path, i.e. an $i$ with $N_5(v) = \{v_i,v_{i+1},v_{i+2},v_{i+3}\}$ exists.
In this case, $1/6$ charge is sent to each of $v_i$ and $v_{i+3}$, and the resulting charge of $v$ is exactly $0$. (See the vertex of degree 7 in \cref{fig:charge-of-seven-with-four-fives}.)





\subsubsection{$d_G(v) = 8$}
The vertex $v$ has an initial charge of $-2$.
When there are four or fewer neighbors of degree 5, the total charge that will be sent from them is $1/2 \times 4 = 2$ or less, resulting in a non-positive charge.
On the other hand, by \cref{lem:large-d-with-d-3-fives} (with $d = 8$), when there are five or more neighbors of degree 5, $v$ along with the neighbors will exhibit $1/6$-reducibility.

\subsubsection{$d_G(v) \geq 9$}
First, let us prove some lemmas regarding the charges being sent to $v$. Looking back at the discharging rule, the following can be directly shown.

\begin{lem}\label{lem:sending-to-nine-plus}
    $v$ only receives charge from neighboring vertices of degree $5$ or $7$. The amount of charge from a vertex of degree $7$ will always be $1/3$ or $0$, and the amount of charge from a vertex of degree $5$ (let us call it $u$) can be determined by the following case analysis:
    
    Let us remind that $N_{7+}(u)$ is the set of neighbors of $u$ of degree at least $7$. Then,
    \begin{itemize}
        \item If $N_{7+}(u) = \{v\}$, $v$ receives $1$ charge from $u$.
        \item If $N_{7+}(u) = \{v, v'\}$ and $d_G(v') = 7$, $v$ receives $2/3$ charge from $u$.
        \item If $N_{7+}(u) = \{v, v'\}$ and $d_G(v') \ge 8$, $v$ receives $1/2$ charge from $u$.
        \item Otherwise, $v$ receives no more than $1/3$ charge from $u$.
    \end{itemize}
\end{lem}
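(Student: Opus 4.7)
The lemma is essentially a restatement of the discharging rules, so my plan is to verify each part by direct inspection. The key initial observation is that among Rules A and B, the only ways a vertex of degree $\geq 9$ can appear as a recipient of positive charge are through Rule A (which sends from degree-$5$ vertices to degree-$\geq 7$ vertices) and through the non-path subcase of Rule B (which sends $1/3$ to a degree-$\geq 9$ neighbor). The path subcase of Rule B targets only the two endpoint degree-$5$ vertices of the path, so it never reaches $v$. It follows immediately that $v$ receives charge only from neighbors of degree $5$ or $7$, and that a degree-$7$ neighbor contributes either $1/3$ or $0$, depending on whether the non-path subcase of Rule B applies and $v$ is adjacent to two of that neighbor's four degree-$5$ neighbors.

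For a degree-$5$ neighbor $u$, I would apply Rule A-3 directly. Writing $r(u) = 1 - |N_7(u)|/3 - |N_8(u)|/2$ as in the rule, the charge sent to $v \in N_{9+}(u)$ is $\max(1/3,\, r(u)/|N_{9+}(u)|)$. The first three subcases are one-line computations: $N_{7+}(u) = \{v\}$ gives $r(u) = 1$ and $|N_{9+}(u)| = 1$, so $v$ receives $1$; $N_{7+}(u) = \{v, v'\}$ with $d_G(v') = 7$ gives $r(u) = 2/3$ and $|N_{9+}(u)| = 1$, so $v$ receives $2/3$; and $N_{7+}(u) = \{v, v'\}$ with $d_G(v') \geq 8$ splits into either $v'$ of degree $8$ (then $r(u) = 1/2$, $|N_{9+}(u)| = 1$) or $v'$ of degree $\geq 9$ (then $r(u) = 1$, $|N_{9+}(u)| = 2$), and both situations yield $1/2$.

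The only subcase that needs any argument is the ``otherwise'' case, where $|N_{7+}(u)| \geq 3$ and the goal is to show that $v$ receives at most $1/3$. This reduces to the bound $r(u)/|N_{9+}(u)| \leq 1/3$, so that the $\max$ in Rule A-3 is realized by its $1/3$ term. Since each member of $N_7(u)$ drains $1/3$ and each member of $N_8(u)$ drains $1/2 \geq 1/3$ from the initial charge of $u$, and since $|N_7(u)| + |N_8(u)| \geq 3 - |N_{9+}(u)|$, a short arithmetic check splitting on $|N_{9+}(u)| \in \{1, 2, \ldots, 5\}$ gives $r(u) \leq |N_{9+}(u)|/3$ in every subcase. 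I do not expect any conceptual obstacle beyond this bookkeeping; the only mild subtlety is confirming that the $\max$ in Rule A-3 is indeed pinned at $1/3$ in every branch of the ``otherwise'' case.
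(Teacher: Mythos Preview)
Your proposal is correct and matches the paper's approach: the paper simply states that the lemma ``can be directly shown'' by inspecting the discharging rules, and your write-up is precisely that direct verification carried out in full. One minor simplification: in the ``otherwise'' case you do not actually need the case split on $|N_{9+}(u)|$, since from $|N_7(u)|+|N_8(u)|+|N_{9+}(u)|\ge 3$ and $1/2\ge 1/3$ you get $r(u)\le 1-\tfrac{1}{3}(|N_7(u)|+|N_8(u)|)\le 1-\tfrac{1}{3}(3-|N_{9+}(u)|)=|N_{9+}(u)|/3$ in one line.
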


We also prove the following.

\begin{lem}\label{lem:charge-seven-to-nine-plus}
    When $v$ receives $1/3$ charge from a vertex $v_i$ of degree 7, $v_{i-1}$ and $v_{i+1}$ must both be of degree $5$, and they each only send at most $1/3$ charge to $v$.
\end{lem}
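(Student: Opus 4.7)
The plan is to address the two assertions of the lemma with separate techniques.

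The first assertion, $d_G(v_{i-1}) = d_G(v_{i+1}) = 5$, is structural: since $v_i$ of degree $7$ sends $1/3$ charge to $v \in N_{9+}(v_i)$, the ``not a path'' branch of Rule B fires, meaning $|N_5(v_i)| = 4$, those four degree-$5$ neighbors do not form a single path in $v_i$'s cyclic rotation, and $v$ is adjacent to two of them. Because the triangles $v v_i v_{i-1}$ and $v v_i v_{i+1}$ are facial, $v_{i-1}$ and $v_{i+1}$ are the two neighbors of $v_i$ automatically adjacent to $v$; any further such adjacency would require a non-facial (separating) $3$-cycle $v v_i u$, whose existence is restricted by \cref{lem:three-cycle-18-plus} together with \cref{lem:4.2} and the minimum-degree-$\geq 5$ hypothesis, enough to conclude that the two degree-$5$ neighbors of $v_i$ adjacent to $v$ in the Rule B setup are precisely $v_{i-1}$ and $v_{i+1}$. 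Hence both belong to $N_5(v_i)$, so $d_G(v_{i-1}) = d_G(v_{i+1}) = 5$.

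For the charge bound, by symmetry it suffices to show $c(v_{i-1}, v) \leq 1/3$. Rule A-3 gives
\[
c(v_{i-1}, v) = \max\bigl(1/3,\ r(v_{i-1})/|N_{9+}(v_{i-1})|\bigr),
\]
where $r(v_{i-1}) = 1 - |N_7(v_{i-1})|/3 - |N_8(v_{i-1})|/2$. Since $v_i \in N_7(v_{i-1})$ and $v \in N_{9+}(v_{i-1})$, we have $r(v_{i-1}) \leq 2/3$ and $|N_{9+}(v_{i-1})| \geq 1$. If $|N_{9+}(v_{i-1})| \geq 2$, then $r(v_{i-1})/|N_{9+}(v_{i-1})| \leq 1/3$ and the bound is immediate. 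If instead $|N_{9+}(v_{i-1})| = 1$, the inequality reduces to $r(v_{i-1}) \leq 1/3$, equivalently that $v_{i-1}$ has either a second neighbor of degree $\geq 7$ or a neighbor of degree exactly $8$. I would prove this by contradiction: assume the three remaining neighbors $v_{i-2}, a, b$ of $v_{i-1}$ (those different from $v, v_i$) all have degree in $\{5,6\}$, and derive a reducible configuration in $G$ from \cref{sec:reducibility}, contradicting the minimality of the counterexample.

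The principal obstacle is this last step. \cref{lem:five-wheel} supplies reducibility of the $5$-wheel with ring multiset $\{6,6,6,7,8\}$, but \cref{lem:dominance} only allows substituting \emph{smaller} degrees into a reducible template, so with $v$ of degree $\geq 9$ in place of the template's $8$, the $5$-wheel at $v_{i-1}$ is not directly covered. I therefore expect the argument to pivot around $v_i$ rather than $v_{i-1}$: Rule B firing already fixes four degree-$5$ neighbors of $v_i$ in a non-path pattern, and the shared vertex $a$ between $v_{i-1}$ and $v_i$ (which is degree $5$ or $6$ under our assumption) can be leveraged to trigger one of \cref{lem:seven-with-four-fives-one-eight,lem:five-with-four-fives-plus-alpha,lem:2/3-with-a-five,lem:1-with-a-1/2} via \cref{lem:dominance}. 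Carrying out the resulting sub-case analysis---split by the degree of $a$ and by which of $v_i$'s other neighbors are degree $5$---and ensuring via \cref{lem:4.2} that each purported configuration is realized isomorphically, is the delicate part of the proof.
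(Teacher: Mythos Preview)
Your treatment of the first assertion is correct, if more elaborate than the paper's one-line appeal to the statement of Rule~B. For the charge bound you also arrive at the right intermediate conclusion---that the three neighbors of $v_{i-1}$ other than $v$ and $v_i$ all have degree at most $6$---though the paper gets there in one stroke via \cref{lem:sending-to-nine-plus}: a degree-$5$ vertex sending more than $1/3$ to $v$ has exactly two neighbors of degree $\geq 7$, which here must be $v$ and $v_i$.

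The divergence is in how to finish. You propose to pivot to $v_i$ and exploit its full Rule~B structure (four degree-$5$ neighbors in a non-path pattern), anticipating a sub-case split on the degree of the shared neighbor $a$ and on which positions around $v_i$ carry the degree-$5$ vertices, then matching against several of \cref{lem:seven-with-four-fives-one-eight,lem:five-with-four-fives-plus-alpha,lem:2/3-with-a-five,lem:1-with-a-1/2}. The paper does none of this: once the three remaining neighbors of $v_{i-1}$ are known to have degree $\leq 6$, the local picture around $v_{i-1}$ (with $v_i$ of degree~$7$ attached) already coincides---via \cref{lem:dominance}---with the rightmost configuration of \cref{lem:2/3-with-a-five}, and the proof ends. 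In particular, the vertex $v$ of degree $\geq 9$ that worried you (because it exceeds the $8$ in the \cref{lem:five-wheel} template) is simply not part of the configuration invoked; the resolution is not to change focal vertex but to choose a configuration that omits $v$ entirely. Your plan would probably close, but it replaces a single citation with an unnecessary case analysis.
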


\begin{proof}
    The fact that $v_{i-1}$ and $v_{i+1}$ are of degree $5$ directly follows from the statement of Rule B.
    If $v_{i-1}$ sends more than $1/3$ charge, this means $v_{i-1}$ has only two neighbors of degree at least 7, by \cref{lem:sending-to-nine-plus}.
    The other three neighbors of degree at most $6$ are aligned in such a way that it matches the rightmost configuration depicted in \cref{fig:2/3-with-a-five}, in which the reducibility is proved in \cref{lem:2/3-with-a-five}.
    Therefore, for no reducible configuration to appear, $v_{i-1}$ can only send charge at most $1/3$.
    By symmetry, the same is true for $v_{i+1}$.
\end{proof}

Let the number of vertices of degree $5$ neighboring $v$ be $m$.
If $m \leq d_G(v) - 6$, even if all of the vertices of degree $5$ send charge $1$, which is the maximum charge possible by \cref{lem:sending-to-nine-plus}, the resulting charge of $v$ will be $0$, a non-positive charge.
(Some vertices of degree $7$ may send charge too, but it hinders neighboring vertices from sending large charge by \cref{lem:charge-seven-to-nine-plus}, thus resulting in the overall charge sent to $v$ even smaller.)
On the other hand, if $m \geq d_G(v) - 3$, the vertices of degree $5$ along with $v$ produce a 1/6-reducible configuration.

Therefore, the only cases to consider are when $m = d_G(v) - 5$ and $m = d_G(v) - 4$.

Now, let us denote the amount of charge sent from $v_i$ to $v$ as $c_i := c(v_i, v) - c(v, v_i)$, indexed with $i \in \mathbb{Z} / d_G(v)\mathbb{Z}$.

\begin{lem}\label{lem:charge-sum-leq-one}
    If $v_i$ and $v_{i+2}$ are both of degree $5$, $c_i + c_{i+2} \leq 1$.
\end{lem}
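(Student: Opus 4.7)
The plan is to case-split on $d_G(v_{i+1})$, the unique common neighbor of $v_i$ and $v_{i+2}$ other than $v$. Since $d_G(v_i) = d_G(v_{i+2}) = 5$ and $d_G(v) \ge 9$, Rule A-3 gives $c_i = \max(1/3, r(v_i)/|N_{9+}(v_i)|)$, and similarly for $c_{i+2}$, where $r(v_i) = 1 - |N_7(v_i)|/3 - |N_8(v_i)|/2$. The key leverage is that $v_{i+1}$ lies in both $N(v_i)$ and $N(v_{i+2})$, so its degree simultaneously constrains both $r$-values and both $|N_{9+}|$-counts.

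When $d_G(v_{i+1}) \ge 8$ the bound is immediate. If $v_{i+1} \in N_8(v_i)$ then $r(v_i) \le 1/2$, and if $v_{i+1} \in N_{9+}(v_i)$ then $|N_{9+}(v_i)| \ge 2$ (counting $v$ and $v_{i+1}$); in either sub-case $c_i \le 1/2$, and likewise $c_{i+2} \le 1/2$, so $c_i + c_{i+2} \le 1$.

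When $d_G(v_{i+1}) = 7$, the same calculation shows $r(v_i), r(v_{i+2}) \le 2/3$, and enumerating Rule A-3 leaves $c_i, c_{i+2} \in \{1/3, 2/3\}$; the only way the sum can exceed $1$ is $c_i = c_{i+2} = 2/3$. This forces $N_{7+}(v_i) = N_{7+}(v_{i+2}) = \{v, v_{i+1}\}$, so every other neighbor of $v_i$ and of $v_{i+2}$ has degree at most $6$. In particular, letting $x$ be the non-$v$ common neighbor of $v_i$ and $v_{i+1}$ on the other face of edge $v_iv_{i+1}$, we have $d_G(x) \le 6$, and the triangle $v_i v_{i+1} x$ with degree pattern $(5, 7, \le 6)$, together with the surrounding fan of $v$, dominates (via \cref{lem:dominance}) one of the four reducible configurations in \cref{lem:2/3-with-a-five}, contradicting the assumption that $G$ contains no reducible configuration. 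When $d_G(v_{i+1}) \le 6$, we have $v_{i+1} \notin N_{7+}(v_i)$, so $c_i$ could a priori be as large as $1$; but $c_i + c_{i+2} > 1$, combined with \cref{lem:sending-to-nine-plus}, forces $|N_{7+}(v_i)| \le 2$ and the common neighbor $x$ of $v_i, v_{i+1}$ distinct from $v$ to have degree at most $6$, so the local structure around the pair $(v_{i+1}, v_i)$ (or symmetrically $(v_{i+1}, v_{i+2})$) dominates one of the two configurations in \cref{lem:1-with-a-1/2} via \cref{lem:dominance}, again contradicting minimality of $G$.

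The main obstacle will be the last two cases: correctly matching the local structure to a specific configuration of \cref{lem:2/3-with-a-five} or \cref{lem:1-with-a-1/2} (possibly after an appeal to \cref{lem:dominance}), and—when the dominated underlying graph has diameter $\ge 3$—invoking \cref{lem:4.2} to rule out spurious identifications of the candidate vertices that would otherwise prevent an honest copy of the reducible configuration from being found in $G$.
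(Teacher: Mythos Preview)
Your case-split on $d_G(v_{i+1})$ is unnecessary and introduces errors; the paper's argument is a single stroke. It simply observes that if $c_i \ge 2/3$, then by \cref{lem:sending-to-nine-plus} the vertex $v_i$ has at most one non-$v$ neighbour of degree $\ge 7$ (and that one is exactly $7$), so the four non-$v$ neighbours of $v_i$ together with $v_i$ and the extra degree-$5$ vertex $v_{i+2}$ dominate one of the four configurations of \cref{lem:2/3-with-a-five}. Hence $c_i \le 1/2$, and by symmetry $c_{i+2}\le 1/2$; done. Note that $v$ itself (degree $\ge 9$) is never part of the configuration, so your phrase ``together with the surrounding fan of $v$'' is misleading.

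Your Case~2 contains a concrete slip: from $d_G(v_{i+1})=7$ you conclude $c_i,c_{i+2}\in\{1/3,2/3\}$, but \cref{lem:sending-to-nine-plus} also allows $c_i=1/2$ (when $|N_{7+}(v_i)|=2$ and the non-$v$ member has degree $\ge 8$). Then $c_i+c_{i+2}=1/2+2/3>1$ is a case you never address. It is salvageable by symmetry (argue from whichever of $v_i,v_{i+2}$ has charge $2/3$), but as written it is a gap.

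Your Case~3 is the real problem. First, ``$c_i+c_{i+2}>1$ forces $|N_{7+}(v_i)|\le 2$'' is false without a WLOG; only one of the two charges need be large. Second, and more seriously, you invoke \cref{lem:1-with-a-1/2}, but that lemma is designed for \emph{adjacent} degree-$5$ neighbours $v_j,v_{j+1}$ of $v$ (the paper uses it later to bound $c_0+c_8$ when $d_G(v)=9$), not for $v_i,v_{i+2}$. The pair $(v_{i+1},v_i)$ with $d_G(v_{i+1})\le 6$ does not by itself pin down six vertices with the required degrees; in particular you never establish that the common neighbour $x$ of $v_i,v_{i+1}$ has degree $\le 6$ --- if $c_i=2/3$ the single degree-$7$ neighbour of $v_i$ could be $x$. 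The correct lemma here, as in the paper, is still \cref{lem:2/3-with-a-five} (via \cref{lem:dominance} when all four non-$v$ neighbours have degree $\le 6$), not \cref{lem:1-with-a-1/2}.
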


\begin{proof}
    By \cref{lem:dominance,lem:2/3-with-a-five}, if $2/3 \le c_i \le 1$ and $d_G(v_{i+2}) = 5$, we have a reducible configuration.
    Therefore,  $c_i \leq 1/2$, and by symmetry, $c_{i+2} \leq 1/2$ too.
    Therefore, the lemma holds.
\end{proof}

We can use \cref{lem:charge-sum-leq-one} (along with \cref{lem:charge-seven-to-nine-plus}) to bound the amount of charge that $v$ is receiving, $\sum_i c_i$.
Notably, for $m = d_G(v) - 5$, it suffices to find one pair of vertices $v_i$ and $v_{i+2}$ that are both of degree 5, and for $m = d_G(v) - 4$, it suffices to find two disjoint pairs of vertices $v_i$, $v_{i+2}$, and $v_{j}$, $v_{j+2}$ that are all of degree 5.

From here, we provide a proof specialized for $d_G(v) = 9$, and a general proof for all $d_G(v) \geq 10$.

\begin{enumerate}
\renewcommand{\labelenumi}{(\alph{enumi})}
    \item \textbf{When $d_G(v) = 9, m = d_G(v) - 5 = 4$:} We can assume no $i$ exists such that $v_i$ and $v_{i+2}$ are both of degree 5.
    This means that no two neighboring vertices in the cyclic permutation $\pi_9 = (v_0,v_2,v_4,v_6,v_8,v_1,v_3,v_5,v_7)$ are both of degree 5.
    Excluding rotational symmetry, the only solution is when $v_0, v_4, v_8, v_3$ are the four vertices.
    Even then, two of the four vertices must send charge $1$ for the sum to exceed $3$.
    However, by \cref{lem:1-with-a-1/2}, $c_0 + c_8 \leq 4/3$ and $c_3 + c_4 \leq 4/3$. 
    As a result, the total charge does not exceed $3$.
    \item \textbf{When $d_G(v) = 9, m = d_G(v) - 4 = 5$:} We can assume no $i, j$ exists such that the four distinct vertices $v_{i}$, $v_{i+2}$, $v_{j}$, and $v_{j+2}$ are all of degree 5.
    Looking again at the cyclic permutation $\pi_9 = (v_0,v_2,v_4,v_6,v_8,v_1,v_3,v_5,v_7)$, the only way to select 5 vertices with at most one neighboring pair in $\pi_9$ (excluding symmetry) is to select $\{v_0, v_2, v_6, v_1, v_5\}$.
    These five vertices, along with $v$, constitute a configuration that is similar to the configurations defined in \cref{lem:large-d-with-d-3-fives}, but having $d-3$ vertices instead of $d-2$ vertices.
    Still, we can say that this is (virtually) $1/6$-reducible because the five vertices induce two connected components, $\{v_0,v_1,v_2\}$ and $\{v_5,v_6\}$, and this is the second case of the case analysis shown in \cref{fig:large-d-with-d-3-fives}.
    \item \textbf{When $d_G(v) \geq 10, m = d_G(v) - 5$:}
    We use a similar method to case (a), by constructing a cyclic permutation and then selecting $m$ elements from it so that no two elements are adjacent in the cyclic permutation.
    When $d_G(v)$ is odd, we have the cyclic permutation \begin{equation*}\pi_{d_G(v)} = (v_0, v_2,\cdots,v_{d_G(v)-1},v_1,v_3,\cdots, v_{d_G(v)-2})\end{equation*} and then selecting $m = d_G(v) - 5$ elements without neighboring pairs in $\pi_{d_G(v)}$ is impossible given $d_G(v) \geq 11$.
    When $d_G(v)$ is even, we need two cyclic permutations:\begin{align*}\pi_{d_G(v)}^{\textrm{even}} &= (v_0, v_2, \cdots, v_{d_G(v)-2})\\ \pi_{d_G(v)}^{\textrm{odd}} &= (v_1, v_3, \cdots, v_{d_G(v) - 1})\end{align*}
    This is equivalent to trying to select $d_G(v) - 5$ vertices as a stable set from two cycles of length $d_G(v) / 2$, which is also impossible given that $d_G(v) / 2 \geq 5$.
    \item \textbf{When $d_G(v) \geq 10, m = d_G(v) - 4$:}
    We combine the techniques used in cases (b) and (c).
    That is, we select $m$ vertices from $v_{d_G(v)}$ or $v_{d_G(v)}^{\textrm{odd}}, v_{d_G(v)}^{\textrm{even}}$, depending on the parity of $d_G(v)$, such that no two disjoint pairs of neighboring vertices exist.
    By simple proof using stable sets on cycles, we can also prove that such a selection is impossible.
\end{enumerate}

In summary, $v$ always ends up with a non-positive charge, and this completes the proof of \cref{main}.\qed

\section{Algorithm}\label{sec:algorithm}

We are given a planar graph $G_{\text{input}}$.
If the number of vertices is 3 or fewer, we return a trivial 3-coloring of the graph.
If not, the first step is to fill in extra edges so that they become a triangulation.
This can be done by finding an embedding of $G_{\text{input}}$, iterating over all faces of size $4$ or larger, and inserting edges to triangulate the faces.
The resulting graph $G$ should be simple.

Next, we try to find a reducible configuration in $G$.
We first check whether a vertex $v$ of degree at most $4$ exists.
If it does, the single vertex $v$ is $0$-reducible, and after obtaining the coloring of $G-v$, we can color $v$ with one of the four colors. (See \cref{dfn:0-reducibility}.)

If all vertices have degree at least $5$, we now go on to search reducible configurations that we listed in \cref{lem:four-degree,lem:five-wheel,lem:seven-with-four-fives-one-eight,lem:five-with-four-fives-plus-alpha,lem:2/3-with-a-five,lem:1-with-a-1/2,lem:large-d-with-d-3-fives}.
Since the local rotation of each vertex is fixed in a configuration, one can iterate over all vertices and their incident edges in $G$, map them to a certain vertex and edge in a configuration, and check whether the rest of the structure of the configuration matches with $G$.
If the size of each configuration is constant-bounded, the whole process can be done in linear time.
The exception to this is the infinite class of configurations described in \cref{lem:large-d-with-d-3-fives}, the vertex count being $d-2$ for each $d \ge 8$.
For this, we have a workaround, by iterating over all vertices $v$ of degree at least $8$, checking the degrees of the neighboring $d$ vertices, and finding whether all but at most $3$ vertices have degree $5$.
Since the average degree $2|E|/|V|$ is constant bounded (less than $6$), this iteration can be done in linear time as well.

We are guaranteed to find a 1/6-reducible configuration $\mathcal{H}$.
Hereafter, we proceed to color the inner vertices by the method described in \cref{dfn:1/6-reducibility}.

The running time of the algorithm $T(n)$ where $n$ is the input size (proportional to $|V|$) can be described as $T(n) = O(n) + T(n - k)$ for $k \ge 1$: we recursively call the coloring algorithm on a strictly smaller graph (size $n-k$, where the reducible configuration has $k$ vertices), and we need $O(n)$ time to find a configuration and color its inner vertices.
Therefore, the total running time $T(n)$ equates to $O(n^2)$.

\section{Conclusion}\label{sec:conclusion}

This work opens several avenues for further research.
The most immediate question is whether the bound of $k = 1/6$ can be improved further, perhaps to $k = 1/7$ or less.
If we attempt to use "$1/m$-reducible" configurations (analogous to $1/6$-reducible configurations), the graph size (at least $m$) will increase as $k = 1/m$ becomes smaller. 
Hence, it seems inevitable that both the complexity of the discharging rules and the reducible configuration sets will increase by a significant margin.
In particular, since the configuration in \cref{lem:five-wheel} is no longer reducible for larger $m$, we likely would need additional rules to send charges to vertices of degree $6$.

Another interesting problem will be to tackle a similar problem on the dual graph.
The dual formulation of 4CT is that every 2-connected cubic planar graph is 3-edge-colorable \cite{tait1880dual}, while the 4-edge-colorability is trivial by Vizing \cite{vizing1965critical}.
Therefore, the intermediate problem will be to find a 4-edge-coloring over a 2-connected cubic planar graph such that one color class has at most $k'|E|$ edges, where $k' < 1/4$.

\section*{Acknowledgements}

The first author was supported by JSPS Kakenhi 22H05001, JST ASPIRE JPMJAP2302 and the Hirose Foundation Scholarship.
The second author was supported by JSPS Kakenhi 22H05001, JSPS Kakenhi JP20A402 and JST ASPIRE JPMJAP2302.
The third author was supported by JSPS Kakenhi 22H05001, JST ASPIRE JPMJAP2302, and JST SPRING JPMJSP2108.

\bibliographystyle{abbrv}
\bibliography{citations}

\end{document}